\title{\bf  Constantin and Iyer's representation formula for the Navier--Stokes equations on manifolds}
\author{Shizan Fang$^1$\footnote{Email: Shizan.Fang@u-bourgogne.fr.}
\quad Dejun Luo$^2$\footnote{Email: luodj@amss.ac.cn.}
\vspace{3mm}\\
{\footnotesize $^1$I.M.B, Universit\'e de Bourgogne, BP 47870, 21078 Dijon, France}\\
{\footnotesize $^2$Key Laboratory of Random Complex Structures and Data Sciences,}\\
{\footnotesize  Academy of Mathematics and Systems Science, Chinese Academy of Sciences, Beijing 100190, China}\\
{\footnotesize $^2$School of Mathematical Sciences, University of the Chinese Academy of Sciences, Beijing 100049, China}
}
\date{}
\def\R{\mathbb{R}}
\def\E{\mathbb{E}}
\def\T{\mathbb{T}}
\def\Z{\mathbb{Z}}
\def\I{\mathcal{I}}
\def\G{\mathcal{G}}
\def\K{\mathcal {K}}
\def\M{\mathcal {M}}
\def\L{\mathcal L}
\def\S{\mathbb{S}}
\def\div{\textup{div}}
\def\d{\textup{d}}
\def\Ric{{\rm Ric}}
\def\Ad{{\rm Ad}}
\def\ad{{\rm ad}}
\def\id{{\rm id}}
\def\<{\langle}
\def\>{\rangle}
\let \dis=\displaystyle
\let\ra=\rightarrow
\newtheorem{theorem}{Theorem}[section]
\newtheorem{lemma}[theorem]{Lemma}       
\newtheorem{corollary}[theorem]{Corollary}
\newtheorem{proposition}[theorem]{Proposition}
\newtheorem{remark}[theorem]{Remark}
\newtheorem{example}[theorem]{Example}
\begin{document}

\maketitle
\makeatletter 
\renewcommand\theequation{\thesection.\arabic{equation}}
\@addtoreset{equation}{section}
\makeatother 

\vspace{-8mm}
\begin{abstract}
The purpose of this paper is to establish a probabilistic representation formula for the Navier--Stokes equations on compact Riemannian manifolds. Such a formula has been provided by Constantin and Iyer in the flat case of $\R^n$ or of $\T^n$. On a Riemannian manifold, however, there are several different choices of Laplacian operators acting on vector fields. In this paper, we shall use the de Rham--Hodge Laplacian operator which seems more relevant to the probabilistic setting, and adopt Elworthy--Le Jan--Li's idea to decompose it as a sum of the square of Lie derivatives.
\end{abstract}

\textbf{MSC 2010}: 35Q30, 58J65

\textbf{Keywords}: Navier--Stokes equations, stochastic representation, de Rham--Hodge Laplacian, stochastic flow, pull-back vector field

\section{Introduction}

The Navier--Stokes equations on a torus $\T^n$ read as
  \begin{equation}\label{NSE}
  \begin{cases}
  \partial_t u+(u\cdot\nabla)u-\nu\Delta u+\nabla p=0,\\
  \nabla\cdot u=0,\quad u|_{t=0}=u_0,
  \end{cases}
  \end{equation}
which describe the evolution of the velocity $u$ of an incompressible viscous fluid with kinematic viscosity $\nu>0$, as well as the pressure $p$. Such equations always attract the attention of many researchers, with an enormous quantity of publications in the literature. Concerning classical results about \eqref{NSE}, we refer to the book \cite{Teman}.  The Lagrangian description of the fluid is to determine the position at time $t$ of the particles of fluid. Due to the high nonlinearity of ordinary differential equations (ODEs for short), such a description was not used too often in the past. However, since the seminal works \cite{DiPernaLions} on the resolution of ODEs with coefficients of low regularity and \cite{Brenier} on the relaxed variational principle for Euler equations, there are more and more interests in Lagrangian descriptions. We refer to \cite{Ambrosio, FangLuo, FangLuoTh, Zhang2, Zhang3} for new developments and various generalizations of \cite{DiPernaLions}, to \cite{Brenier2, AmbrosioF} for generalized flows of Euler equations and to \cite{AC1, AC2, ACF} for generalized stochastic flows of Navier--Stokes equations.

The study of the connections between Navier--Stokes equations and stochastic evolution has a quite long history, which can be traced back to a work of Chorin \cite{Chorin}. Le Jan and Sznitman used in \cite{LeJan} a backward-in-time branching process to express Navier--Stokes equations through Fourier transformations. In \cite{Flandoli}, the authors obtained a representation formula using noisy flow paths for 3-dimensional Navier--Stokes equations. Constantin and Iyer \cite{Constantin} established a probabilistic Lagrangian representation formula by making use of stochastic flows. We also refer to \cite{Constantin} for a more complete description of the history of the developments.

For reader's convenience, let us first state Constantin and Iyer's result \cite[Theorem 2.2]{Constantin}:

\begin{theorem}\label{thm-constantin}
Let $\nu>0$, $W$ be an $n$-dimensional Wiener process, and $u_0\in C^{2,\alpha}$ a given deterministic divergence-free vector field.
Let the pair $(X,u)$ satisfy the stochastic system
  \begin{equation}\label{thm-constantin.1}
  \begin{cases}
  \d X_t=\sqrt{2\nu}\,\d W_t+u_t(X_t)\,\d t,\\
  u_t=\E\mathbf{P}\big[\big(\nabla X_t^{-1}\big)^{\! \top} \big(u_0\circ X_t^{-1}\big)\big],
  \end{cases}
  \end{equation}
where $\mathbf{P}$ is the Leray--Hodge projection and $\top$ denotes the transposition of matrix. Then $u$ satisfies the incompressible Navier--Stokes equations \eqref{NSE}.
\end{theorem}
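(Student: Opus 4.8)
The plan is to verify directly that any pair $(X,u)$ solving the system \eqref{thm-constantin.1} makes $u$ a solution of \eqref{NSE}; the existence of such a pair is not at issue here. Write $A_t:=X_t^{-1}$ for the back-to-labels map and set
\[
w_t:=\big(\nabla X_t^{-1}\big)^{\!\top}\big(u_0\circ X_t^{-1}\big)=\big(\nabla A_t\big)^{\!\top}\big(u_0\circ A_t\big),
\]
so that the second line of \eqref{thm-constantin.1} reads $u_t=\E\mathbf{P}[w_t]$. The first observation is that $w_t$ is nothing but the pull-back of $u_0$, regarded as a $1$-form $u_0^\flat$, by $A_t$: in coordinates $(w_t)_i=(u_0)_k(A_t)\,\partial_i A_t^k=(A_t^* u_0^\flat)_i$. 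Since $\mathbf{P}$ is a deterministic linear operator it commutes with the expectation, so $u_t=\mathbf{P}\,\overline w_t$ with $\overline w_t:=\E[w_t]$; the strategy is to obtain a closed evolution equation for $\overline w_t$ and then project it with $\mathbf{P}$.

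First I would derive the stochastic PDE governing $w_t$ by the It\^o--Wentzell formula for pull-backs of forms under the inverse of a stochastic flow (Kunita's calculus; this is exactly the step that, in the manifold setting, will be replaced by the Elworthy--Le Jan--Li decomposition of the de Rham--Hodge Laplacian into a sum of squares of Lie derivatives). Because the diffusion coefficient $\sqrt{2\nu}$ is constant, the Stratonovich and It\^o drifts of \eqref{thm-constantin.1} coincide, and the Stratonovich transport of the pull-back reads $\d w_t=-\sqrt{2\nu}\sum_k\partial_k w_t\circ\d W^k_t-\mathcal{L}_{u_t}w_t\,\d t$, where $\mathcal{L}_{u_t}$ is the Lie derivative. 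Converting to It\^o produces the second-order correction $\tfrac12(2\nu)\sum_k\partial_k^2 w_t\,\d t=\nu\Delta w_t\,\d t$, whence
\[
\d w_t=\nu\Delta w_t\,\d t-\mathcal{L}_{u_t}w_t\,\d t-\sqrt{2\nu}\sum_k\partial_k w_t\,\d W^k_t.
\]
This identification of the viscous term $\nu\Delta$ with the It\^o correction coming from the Brownian part is the heart of the matter and the step I expect to be most delicate: it requires handling the martingale part of $A_t$ and the cross-variation terms in the It\^o--Wentzell formula with care, and it is precisely where the choice of the de Rham--Hodge Laplacian, rather than the Bochner or connection Laplacian, becomes essential on a curved manifold.

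Taking expectations kills the martingale, and --- crucially --- since $u_t=\E\mathbf{P}[w_t]$ is \emph{deterministic}, the advection operator $\mathcal{L}_{u_t}$ has non-random coefficients and factors out of the expectation, yielding the closed linear equation $\partial_t\overline w_t=\nu\Delta\overline w_t-\mathcal{L}_{u_t}\overline w_t$. Applying $\mathbf{P}$ and using $\partial_t u_t=\mathbf{P}\partial_t\overline w_t$, it remains to simplify the two terms. For the viscous term one checks $\mathbf{P}\Delta\overline w_t=\Delta u_t$, since $\Delta$ commutes with $\mathbf{P}$ and $\mathbf{P}$ annihilates gradients. For the nonlinear term I would use the Weber-type algebraic identity: writing $\overline w_t=u_t+\nabla q_t$ with $u_t=\mathbf{P}\overline w_t$ divergence-free, a direct computation gives $\mathcal{L}_{u_t}\overline w_t=(u_t\cdot\nabla)u_t+\tfrac12\nabla|u_t|^2+\nabla(u_t\cdot\nabla q_t)$, so that $\mathbf{P}\mathcal{L}_{u_t}\overline w_t=\mathbf{P}[(u_t\cdot\nabla)u_t]$ because the gradient terms are killed by $\mathbf{P}$. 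Combining the pieces gives $\partial_t u_t=\nu\Delta u_t-\mathbf{P}[(u_t\cdot\nabla)u_t]$, which is \eqref{NSE} with $\nabla p=(\id-\mathbf{P})[(u_t\cdot\nabla)u_t-\nu\Delta u_t]$; the constraint $\nabla\cdot u_t=0$ holds by construction of $\mathbf{P}$, and the initial condition $u|_{t=0}=\mathbf{P}u_0=u_0$ follows from $A_0=\id$ together with $\div\,u_0=0$.
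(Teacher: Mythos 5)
Your argument is correct in outline, but it is a genuinely different route from the one taken in this paper --- in fact it is essentially a reconstruction of Constantin and Iyer's \emph{original} second proof (apply the It\^o--Wentzell/Kunita formula to $w_t=(\nabla X_t^{-1})^{\top}(u_0\circ X_t^{-1})=(X_t^{-1})^*u_0^\flat$, take expectations, then use the Weber identity $\mathcal L_{u}(u^\flat)=(u\cdot\nabla)u+\tfrac12\nabla|u|^2$ and the Leray projection to close the equation). The proof given in Section~\ref{Alternative-proof} instead dualizes: it tests $u_t$ against a divergence-free field $v$, uses the volume-preserving property of the flow to rewrite $\int\langle u_t,v\rangle\,\d x=\E\int\langle u_0,(X_t^{-1})_*v\rangle\,\d x$, and then applies Kunita's pull-back formula to the \emph{test field} $(X_t^{-1})_*v$; the terms $\Delta v$ and $[u_s,v]$ are again divergence-free (Lemma~\ref{lem-1}), so the same identity can be reused to close the weak formulation, and the Leray projection never has to be manipulated at all. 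What the paper's route buys is precisely what you flag as the delicate point of yours: in your version all derivatives land on $w_t$, so writing $\Delta w_t$ classically requires the flow to be roughly one order smoother than $C^2$ (this is why Constantin--Iyer needed a ``generalized'' It\^o formula, via mollification or a distributional reading of the SPDE for $w_t$); in the paper's version the two derivatives land on the smooth test field $v$, and only $C^{1,\alpha}$ regularity of $u_t$ is needed to invoke Kunita's theorem. Your computation is otherwise sound --- the signs in the Stratonovich transport equation, the It\^o correction $\nu\Delta w_t$, the factoring of $\mathcal L_{u_t}$ out of the expectation because $u_t$ is deterministic, and the elimination of the gradient parts by $\mathbf P$ are all right --- but you should either justify the generalized It\^o formula for the low-regularity field $w_t$ or recast the SPDE weakly before taking $\Delta$.
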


Based on this stochastic representation, Constantin and Iyer were able to give a self-contained proof of the local existence of the solution to the system \eqref{NSE}. Two proofs of Theorem \ref{thm-constantin} were provided in \cite{Constantin}: the first one relies heavily on the fact that the diffusion coefficient of the stochastic differential equation (SDE) in \eqref{thm-constantin.1} is constant, and transforms it into a random ODE by absorbing the Wiener process into the drift coefficient $u$; the second one applies the generalized It\^o formula  to the quantity $\big(\nabla X_t^{-1}\big)^{\!\top} \big(u_0\circ X_t^{-1}\big)$ which, combined with the stochastic PDE fulfilled by the inverse $X_t^{-1}$, leads to the desired result. Note that if $x\ra u_t(x)$ is $2\pi$-periodic with respect to each component, then SDE \eqref{thm-constantin.1} defines a flow of diffeomorphims on the torus $\T^n$. In order to avoid the computation of the inverse flow $X_t^{-1}$, X. Zhang \cite{Zhang} used the idea that the inverse flow can be described by SDEs driven by time-reversed Brownian motion, and established a similar stochastic representation formula for the backward incompressible Navier--Stokes equations.

The purpose of this note is to extend Constantin and Iyer's representation formula to the Navier--Stokes equations on Riemannian manifolds. To this end, we first give in Section \ref{Alternative-proof} a more geometric interpretation to the formula of $u_t$ in \eqref{thm-constantin.1}, then provide an alternative proof of Theorem \ref{thm-constantin} by making use of Kunita's formula for the pull-back of vector fields under the stochastic flow. Surprisingly enough, it is simpler to work with the inverse flow. More precisely, we get the following expression
  \begin{equation}\label{eq1}
  \int_{\T^n}\<u_t,v\>\,\d x=\E\bigg(\int_{\T^n}\big\<u_0,(X_{t}^{-1})_\ast v\big\>\,\d x\bigg),\quad \forall\, t\geq0,
  \end{equation}
which means that the evolution of $u_t$ in the direction $v$ is equal to the average of the evolution of $v$ under the inverse flow $X_t^{-1}$ in the initial direction $u_0$. The formula \eqref{eq1} has an intrinsic meaning and is suitable to be generalized to Riemannian manifolds.

On a Riemannian manifold $M$, due to the presence of Ricci tensor, there are several ways to define Laplacian operators on vector fields. More precisely, let $\nabla$ be the Levi--Civita connection and $d$ the exterior differential, then we have the covariant Laplacian $\dis\Delta = \hbox{\rm Tr}(\nabla^2)$ and the de Rham--Hodge Laplacian operator $\square=dd^\ast +d^\ast d$. The Weitzenb\"ock formula asserts that
  \begin{equation}\label{Ric1}
  -\square=\Delta - \Ric,
  \end{equation}
where $\Ric$ denotes the Ricci curvature on $M$. In this work, we will be concerned with the opposite $-\square$ of the de Rham--Hodge Laplacian operator, which has a rich literature in stochastic analysis on manifolds, see for example \cite{Elworthy, Malliavin}. Notice that in the geometric setting (cf. \cite{Pierfelice}), the following Laplacian operator
  \begin{equation}\label{Ric2}
  \hat\square=\Delta + \Ric
  \end{equation}
has been used. However, in \cite{TemamW}, Temam and Wang used the de Rham--Hodge operator $\square$.

In Section \ref{Extension}, we shall adopt the idea in \cite{ELL} to decompose $-\square$ as a sum of the square of Lie derivatives on differential forms:
  \begin{equation}\label{Ric3}
  -\square = \sum_{i\in \I} \L_{A_i}^2,
  \end{equation}
where the family $\{A_i:i\in \I\}$ of vector fields might be finite or countable. In general, the vector fields $A_i$ are not of divergence free. See Section \ref{Extension} for the conditions on $\{A_i;\, i\in \I\}$ which ensure such a decomposition. It is surprising that the extra condition
  \begin{equation}\label{Ric4}
  \sum_{i\in \I} \div(A_i) \L_{A_i}B=0\quad\hbox{\rm for any vector field }B
  \end{equation}
is needed so that the decomposition \eqref{Ric3} holds also for vector fields. A new formula in Section \ref{Extension} is
  \begin{equation}\label{Formula2}
  u_t=\E {\mathbf P}\Bigl[\bigl(\rho_t\, (X_t^{-1})^\ast (u_0^\flat) \bigr)^{\sharp}\Bigr]
  \end{equation}
where $\rho_t$ is the Radon--Nikodym density of the associated stochastic flow $X_t$, and we use the musical application $\flat$ (resp. $\sharp$) to transform a vector field $A$ (resp. a differential 1-form $\theta$) to a differential 1-form $A^\flat$ (resp. a vector field $\theta^\sharp$).

The Sections \ref{symmetric} and \ref{isotropic flow} are devoted to examples of vector fields in different spaces which satisfy the conditions (a)--(d) in Section \ref{Extension}. In particular, we give in Section \ref{symmetric} a relatively detailed introduction of the Riemannian symmetric spaces and show that there is a family of Killing vector fields verifying these conditions.  In Section \ref{isotropic flow}, we treat two important examples: tori and spheres, where the divergence-free eigenvector fields of $\square$ enjoy all required properties in Section \ref{Extension}. In all the cases, the vector fields are of divergence free, thus they will generate volume-preserving stochastic flows for which the formula \eqref{Formula2} holds with $\rho_t=1$. Finally, we shall present in Section \ref{appendix} some explicit computations concerning the gradient system $\{A_i;\, 1\leq i\leq n+1\}$ on the sphere $\S^n$, to exhibit the conditions appearing in Section \ref{Extension}.

\section{An alternative proof of Constantin--Iyer's result}\label{Alternative-proof}

Before giving the proof, let us make some preparations. Let $M$ be a compact Riemannian manifold without boundary and $\varphi:M\to M$ a diffeomorphism. Given a vector field $A$ on $M$, the pull-back vector field $(\varphi^{-1})_\ast A$ is defined by
  $$\bigl((\varphi^{-1})_\ast A\bigr) f(x)=A(f\circ \varphi^{-1})(\varphi(x)),\quad\hbox{for any}\quad f\in C^1(M),x\in M.$$
Equivalently,
  \begin{equation}\label{A.1}
  \big((\varphi^{-1})_\ast A\big)(x)=d\varphi^{-1}(\varphi(x))A(\varphi(x))=(d\varphi(x))^{-1}A(\varphi(x)),
  \end{equation}
where $d\varphi$ is the differential of $\varphi$. For two smooth vector fields $A,B$ on $M$, the Lie derivative $\L_A B$ is defined as
  $$(\L_A B)(x)=\lim_{t\to 0}\frac{\big((\varphi^{-1}_{t})_\ast B\big)(x)-B(x)}{t},$$
where $\varphi_t$ is the flow generated by $A$. It is well known that $\L_A B=[A,B]=AB-BA$. We have the following simple result.

\begin{lemma}\label{lem-1}
If $A$ and $B$ are  vector fields of divergence free on $M$, then so is $\L_A B$.
\end{lemma}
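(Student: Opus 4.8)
The plan is to reduce the statement to the pointwise identity
\[
\div(\L_A B)=A(\div B)-B(\div A),
\]
after which the conclusion is immediate: if $\div A=\div B=0$, then both terms on the right-hand side vanish, and hence $\div(\L_A B)=0$. Since $\L_A B=[A,B]$, the whole problem is to compute the divergence of a Lie bracket in terms of the divergences of the two fields.

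To establish this identity I would work with the Riemannian volume form $\mu$, using the characterization of the divergence $\L_C\mu=(\div C)\,\mu$ valid for any smooth vector field $C$ (equivalently, since $\mu$ is of top degree, Cartan's formula gives $\L_C\mu=d(\iota_C\mu)$). First I would invoke the classical commutator identity for Lie derivatives acting on differential forms, $\L_{[A,B]}=\L_A\L_B-\L_B\L_A$, and apply it to $\mu$:
\[
\big(\div(\L_A B)\big)\,\mu=\L_A(\L_B\mu)-\L_B(\L_A\mu)=\L_A\big((\div B)\,\mu\big)-\L_B\big((\div A)\,\mu\big).
\]
Expanding each term by the Leibniz rule $\L_C(f\mu)=(Cf)\,\mu+f\,\L_C\mu$ and using $\L_C\mu=(\div C)\,\mu$ once more, the two products $(\div A)(\div B)\,\mu$ cancel, leaving $\big(A(\div B)-B(\div A)\big)\,\mu$, which is precisely the claimed identity.

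The only points requiring care are the commutator formula $\L_{[A,B]}=[\L_A,\L_B]$ on forms and the Leibniz rule for $\L_C$ on the product $f\mu$; both are standard facts of the Cartan calculus of Lie derivatives, so no genuine obstacle arises. As an alternative that sidesteps the commutator identity, one may argue by integration on the closed manifold $M$: for every $f\in C^\infty(M)$ and every smooth vector field $C$ one has $\int_M Cf\,\mu=-\int_M(\div C)\,f\,\mu$, so $\div A=0$ yields $\int_M A g\,\mu=0$ for all $g$, and likewise for $B$. Writing $\int_M(\L_A B)f\,\mu=\int_M\big(A(Bf)-B(Af)\big)\,\mu$ and applying this to $g=Bf$ and $g=Af$ shows the integral vanishes for all $f$, whence $\div(\L_A B)=0$. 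I would present the volume-form computation as the main argument, since it delivers the sharper pointwise identity above.
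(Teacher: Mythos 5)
Your proposal is correct. Your fallback argument --- integrating $\int_M\big(A(Bf)-B(Af)\big)\,\d x$ and killing each term using $\int_M Cg\,\d x=0$ for divergence-free $C$ --- is exactly the paper's proof, word for word in substance. Your main argument, however, is a genuinely different and sharper route: by applying the commutator identity $\L_{[A,B]}=\L_A\L_B-\L_B\L_A$ to the Riemannian volume form and using $\L_C\mu=(\div C)\,\mu$, you obtain the pointwise identity $\div(\L_A B)=A(\div B)-B(\div A)$, of which the lemma is an immediate corollary. What this buys you is twofold: the conclusion becomes local and does not use compactness of $M$ (the paper's integral argument needs a closed manifold, or at least integrability, to make sense of $\int_M Cg\,\d x=0$), and you learn what $\div[A,B]$ is even when $A$, $B$ are not divergence free. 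What the paper's argument buys is brevity and minimal machinery: it needs only the weak characterization of the divergence and no Cartan calculus. Both computations are sound; the cancellation of the $(\div A)(\div B)\,\mu$ terms in your expansion is exactly as you describe.
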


\begin{proof}
Since the vector fields $A$ and $B$ are of divergence free, it holds that $\int_M Af\,\d x=\int_M Bf\,\d x=0$ for any function $f\in C^1(M)$. Therefore,
  $$\int_M (\L_A B)f\,\d x=\int_M A(Bf)\,\d x-\int_M B(Af)\,\d x=0,$$
which clearly implies that $\L_A B$ is of divergence free.
\end{proof}

Now we present another proof of Theorem \ref{thm-constantin}, using directly Kunita's formula for the pull-back vector fields under stochastic flows, see \cite[Theorem 2.1, p.265]{Kunita}. Throughout this section, we assume that $u_t$ is in the class $C^{2,\alpha}$ to guarantee that $X_t$ is a stochastic flow of $C^2$-diffeomorphisms.

\begin{proof}[Proof of Theorem \ref{thm-constantin}]
Let $(X,u)$ be the pair solving the system \eqref{thm-constantin.1}. Then $X=(X_t)_{t\geq 0}$ is a stochastic flow of $C^2$-diffeomorphisms on $\T^n$. Since the diffusion coefficient of the SDE is constant and the drift $u$ is of divergence free, we know that the flow $X_t$ preserves the volume measure of the torus $\T^n$. Let $v$ be a vector field of divergence free on $\T^n$, the expression of $u$ in \eqref{thm-constantin.1} gives us
  \begin{align*}
  \int_{\T^n}\<u_t,v\>\,\d x&=\E\bigg(\int_{\T^n}\big\<\big(\nabla X_t^{-1}\big)^{\! \top} \big(u_0\circ X_t^{-1}\big),v\big\>\,\d x\bigg)\\
  &=\E\bigg(\int_{\T^n}\big\<u_0\circ X_t^{-1},\big(\nabla X_t^{-1}\big)v\big\>\,\d x\bigg)\\
  &=\E\bigg(\int_{\T^n}\big\<u_0,\big(\nabla X_t^{-1}(X_t)\big)v(X_t)\big\>\, d x\bigg),
  \end{align*}
where in the last equality we have used the measure-preserving property of $X_t^{-1}$. According to \eqref{A.1}, we get
  \begin{equation}\label{proof.1}
  \int_{\T^n}\<u_t,v\>\,\d x=\E\bigg(\int_{\T^n}\big\<u_0, (X_{t}^{-1})_\ast v\big\>\,\d x\bigg),\quad \forall\, t\geq0.
  \end{equation}

Now by \cite[p.265]{Kunita}, if $u_t$ is of $C^{1,\alpha}$, we have
  $$(X_{t}^{-1})_\ast v= v+\sqrt{2\nu}\sum_{i=1}^n\int_0^t (X_{s}^{-1})_\ast (\partial_i v)\,\d W^i_s
  +\nu \int_0^t (X_{s}^{-1})_\ast (\Delta v)\,\d s+\int_0^t (X_{s}^{-1})_\ast ([u_s,v])\,\d s,$$
where ${\partial_i}v$ denotes the partial derivative of $v$. Substituting this expression of $(X_{t}^{-1})_\ast (v)$ into \eqref{proof.1}, we arrive at
  \begin{equation}\label{proof.2}
  \begin{split}
  \int_{\T^n}\<u_t,v\>\,\d x= &\int_{\T^n}\<u_0,v\>\,\d x+\nu \E\int_0^t\!\!\int_{\T^n}\big\<u_0, (X_{s}^{-1})_\ast (\Delta v)\big\>\,\d x\d s\\
  &+\E\int_0^t\!\!\int_{\T^n}\big\<u_0, (X_{s}^{-1})_\ast ([u_s,v])\big\>\,\d x\d s.
  \end{split}
  \end{equation}
As the vector field $\Delta v$ is of divergence free, we have by \eqref{proof.1} that
  \begin{equation}\label{proof.3}
  \E\int_0^t\!\!\int_{\T^n}\big\<u_0, (X_{s}^{-1})_\ast (\Delta v)\big\>\,\d x\d s=\int_0^t\!\!\int_{\T^n} \<u_s,\Delta v\>\,\d x\d s.
  \end{equation}
Next, Lemma \ref{lem-1} tells us that $[u_s,v]$ is also of divergence free, therefore again by \eqref{proof.1},
  \begin{align*}
  \E\int_0^t\!\!\int_{\T^n}\big\<u_0, (X_{s}^{-1})_\ast ([u_s,v])\big\>\,\d x\d s
  &=\int_0^t\!\!\int_{\T^n} \<u_s,[u_s,v]\>\,\d x\d s\\
  &=\int_0^t\!\!\int_{\T^n} \<u_s, \nabla_{u_s} v- \nabla_v u_s\>\,\d x\d s\\
  &=\int_0^t\!\!\int_{\T^n} \<u_s, \nabla_{u_s} v\>\,\d x\d s-\frac12\int_0^t\!\!\int_{\T^n} v(|u_s|^2)\,\d x\d s\\
  &=\int_0^t\!\!\int_{\T^n} \<u_s, \nabla_{u_s} v\>\,\d x\d s,
  \end{align*}
where in the last equality we have used the fact that $v$ is of divergence free. Substituting this equality and \eqref{proof.3} into \eqref{proof.2}, we obtain for all $t\geq 0$ that
  $$\int_{\T^n}\<u_t,v\>\,\d x=\int_{\T^n}\<u_0,v\>\,\d x+\nu \int_0^t\!\!\int_{\T^n} \<u_s,\Delta v\>\,\d x\d s +\int_0^t\!\!\int_{\T^n} \<u_s, \nabla_{u_s} v\>\,\d x\d s.$$

The above equality implies that for a.e. $t\geq 0$, it holds
  $$\frac{\d}{\d t} \int_{\T^n}\<u_t,v\>\,\d x=\nu \int_{\T^n} \<u_t,\Delta v\>\,\d x+\int_{\T^n} \<u_t, \nabla_{u_t} v\>\,\d x.$$
Multiplying both sides by a real-valued function $\alpha\in C_c^1([0,\infty))$ and integrating by parts, we arrive at
  $$\alpha(0)\int_{\T^n}\<u_0,v\>\,\d x+\int_0^\infty\!\!\int_{\T^n} \big[\alpha'(t)\<u_t,v\>+\nu \alpha(t)\<u_t,\Delta v\>+\alpha(t)\<u_t, \nabla_{u_t} v\>\big]\,\d x\d t=0.$$
Therefore, $u_t$ is a weak solution of the Navier--Stokes equations. Since $u_t$ is assumed to be in the class $C^{2,\alpha}$, it is also a strong solution.
\end{proof}

\section{Navier--Stokes equations on compact Riemannian manifolds}\label{Extension}

In this section, we shall establish the stochastic representation for Navier--Stokes equations on a compact Riemannian manifold $M$ of dimension $n$. To this end, we assume that there exists a (possibly infinite) family of smooth vector fields $\{A_i;\ i\in \I\}$ on $M$ satisfying the following conditions:
\begin{itemize}
\item[\rm(a)] for all $x\in M$, $\dis\sum_{i\in \I}\langle A_i(x), u\rangle_{T_xM}^2=|u|_{T_xM}^2$ for any $u\in T_xM$;
\item[\rm(b)] $\dis  \sum_{i\in \I} \nabla_{A_i} A_i=0$;
\item[\rm(c)] $\dis \sum_{i\in \I}A_i\wedge \nabla_VA_i=0$ for any vector field $V$.
\end{itemize}
Here $\nabla$ denotes the covariant derivative with respect to the Levi--Civita connection on $M$ and $\wedge$ the exterior product. First of all, we give the following example.

\begin{example}[Gradient system]\label{ex1} {\rm
By Nash's embedding theorem, $M$ can be isometrically embedded into $\R^m$ for some $m>n$. For any $x\in M$, denote by $P_x$ the orthogonal projection from $\R^m$ onto $T_xM$. Let $e=\{e_1, \cdots, e_m\}$ be an orthonormal basis of $\R^m$. According to \cite[Section 4.2]{Stroock}, we define
  \begin{equation*}
  A_i(x)=P_x(e_i),\quad x\in M,\, i=1, \cdots, m.
  \end{equation*}
Then $\{A_1, \cdots, A_m\}$ are smooth vector fields satisfying conditions (a), (b) and (c). Note that condition (c) does not often appear. For a justification of (c), we refer to \cite[Remark 2.3.1, p.39]{ELL}. For the case of spheres, we shall do explicit computations in Appendix to illustrate conditions (a), (b) and (c).
}
\end{example}

Now we shall decompose the de Rham--Hodge Laplacian operator $\square$ as the sum of $\L_{A_i}^2$, where $\L_A$ denotes the Lie derivative with respect to the vector field $A$. For a differential form $\omega$ on $M$, it holds that
  \begin{equation}\label{A.31}
  \L_A d\omega=d\L_A\omega,
  \end{equation}
where $d$ is the exterior derivative. Let $I(A)$ be the inner product by $A$, that is, for a differential $q$-form $\omega$,
  \begin{equation*}
  (I(A)\omega)(V_2, \cdots, V_q)=\omega(A, V_2, \cdots, V_q).
  \end{equation*}
Following \cite{ELL}, we define, for a differential $q$-form $\omega$,
  \begin{equation}\label{A.32}
  \hat\delta(\omega)=\sum_{i\in \I} I(A_i)(\L_{A_i}\omega).
  \end{equation}
Let $\delta$ be the divergence operator on differential forms, which admits the expression
  \begin{equation}\label{A.33}
  \delta(\omega)(v_2, \cdots, v_q)=\sum_{j=1}^n (\nabla_{u_j}\omega)(u_j, v_2, \cdots, v_q),
  \end{equation}
where $\{u_1, \cdots, u_n\}$ is an orthonormal basis of $T_xM$.

\begin{proposition}\label{prop3.1}
Under conditions {\rm (a)} and {\rm (b)}, for any differential $1$-form $\omega$, $\hat\delta(\omega)=\delta(\omega)$.
\end{proposition}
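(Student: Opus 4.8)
The plan is to compute both $\hat\delta(\omega)$ and $\delta(\omega)$ at a fixed point $x\in M$ and show they agree as scalars (since a $1$-form yields a function under these operators). First I would unwind the definition in \eqref{A.32}: for a $1$-form $\omega$, $I(A_i)(\L_{A_i}\omega)=(\L_{A_i}\omega)(A_i)$, so that $\hat\delta(\omega)=\sum_{i\in\I}(\L_{A_i}\omega)(A_i)$. The next step is to expand the Lie derivative using Cartan's formula $\L_A\omega=d\,I(A)\omega+I(A)\,d\omega$, or equivalently the coordinate-free identity $(\L_{A}\omega)(V)=A\big(\omega(V)\big)-\omega([A,V])$. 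Evaluating at $V=A_i$ and recalling $[A_i,A_i]=0$ gives the clean expression $(\L_{A_i}\omega)(A_i)=A_i\big(\omega(A_i)\big)$, which I would then rewrite in terms of the covariant derivative using the connection-compatibility of $\nabla$.

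The heart of the argument is to convert $A_i\big(\omega(A_i)\big)$ into covariant-derivative form. Writing $\omega(A_i)=\langle\omega^\sharp,A_i\rangle$ and using that $\nabla$ is metric, I would get
\begin{equation*}
A_i\big(\omega(A_i)\big)=\langle\nabla_{A_i}\omega^\sharp,A_i\rangle+\langle\omega^\sharp,\nabla_{A_i}A_i\rangle=(\nabla_{A_i}\omega)(A_i)+\omega(\nabla_{A_i}A_i).
\end{equation*}
Summing over $i\in\I$, condition (b) kills the second term entirely, leaving $\hat\delta(\omega)=\sum_{i\in\I}(\nabla_{A_i}\omega)(A_i)$. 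This is where conditions (a) and (b) enter decisively: (b) removes the non-tensorial correction coming from the non-geodesic nature of the $A_i$, and (a) will be used in the final matching step.

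It then remains to identify $\sum_{i\in\I}(\nabla_{A_i}\omega)(A_i)$ with the divergence $\delta(\omega)=\sum_{j=1}^n(\nabla_{u_j}\omega)(u_j)$ from \eqref{A.33} (for a $1$-form the free indices $v_2,\dots,v_q$ are absent). The key point is that the bilinear map $(u,v)\mapsto(\nabla_u\omega)(v)$ is a $(2,0)$-tensor, so its trace can be computed in any frame, and I would invoke condition (a), which states precisely that $\{A_i\}$ resolves the identity on each tangent space $T_xM$, i.e. $\sum_i\langle A_i,u\rangle\langle A_i,v\rangle=\langle u,v\rangle$. Expanding each $A_i$ in the orthonormal basis $\{u_1,\dots,u_n\}$ and using (a) to get $\sum_i\langle A_i,u_j\rangle\langle A_i,u_k\rangle=\delta_{jk}$, the double sum $\sum_i(\nabla_{A_i}\omega)(A_i)$ collapses to $\sum_{j}(\nabla_{u_j}\omega)(u_j)=\delta(\omega)$. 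The main obstacle I anticipate is purely bookkeeping in the infinite-family case: one must justify that the pointwise sums over $i\in\I$ converge and that the tensorial manipulations (pulling $\nabla$ through $\langle\cdot,\cdot\rangle$ and interchanging summation with the trace) are legitimate term by term, but condition (a) guarantees that at each point only the effective $n$-dimensional trace survives, so no genuine analytic difficulty remains.
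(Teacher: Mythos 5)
Your proposal is correct and follows essentially the same route as the paper: identify $(\L_{A_i}\omega)(A_i)$ with $A_i(\omega(A_i))=(\nabla_{A_i}\omega)(A_i)+\omega(\nabla_{A_i}A_i)$, kill the second term by (b), and use (a) to collapse the remaining sum to the trace $\sum_j(\nabla_{u_j}\omega)(u_j)=\delta(\omega)$. The only cosmetic difference is that you justify the Leibniz step via $\omega^\sharp$ and metric compatibility rather than the Leibniz rule for $\nabla$ on $1$-forms directly.
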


\begin{proof}
We have
  \begin{equation}\label{A.34}
  I(A_i)\L_{A_i}\omega=(\L_{A_i}\omega)(A_i)=\L_{A_i}(\omega(A_i))=\omega(\nabla_{A_i}A_i)+(\nabla_{A_i}\omega)(A_i).
  \end{equation}
Let $\{u_1, \cdots, u_n\}$ be an orthonormal basis of $T_xM$, then condition (a) yields
  $$\dis \sum_{i\in \I}\langle A_i(x), u_j\rangle\,\langle A_i(x), u_k\rangle=\langle u_j,u_k\rangle=\delta_{jk}.$$
Therefore, replacing $A_i(x)$ by $\sum_{j=1}^n \langle A_i(x), u_j\rangle u_j$ at the last term in \eqref{A.34}, and summing over $i\in \I$ leads to $\delta(\omega)$ according to \eqref{A.33};  the sum of the first term on the right hand side of \eqref{A.34} vanishes by condition (b).
\end{proof}

\begin{proposition}\label{prop3.2}
Under {\rm (a)}, {\rm (b)} and {\rm (c)},  for any differential $2$-form $\omega$, $\hat\delta(\omega)=\delta(\omega)$.
\end{proposition}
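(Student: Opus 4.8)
The plan is to mimic the proof of Proposition \ref{prop3.1}, replacing the pointwise identity \eqref{A.34} with its analogue for $2$-forms. First I would record the formula expressing a Lie derivative through the (torsion-free) Levi--Civita connection: for any vector field $A$ and any $2$-form $\omega$,
$$(\L_A\omega)(X,Y)=(\nabla_A\omega)(X,Y)+\omega(\nabla_X A,Y)+\omega(X,\nabla_Y A),$$
which follows by expanding $\L_A\omega=A\,\omega(X,Y)-\omega([A,X],Y)-\omega(X,[A,Y])$ and substituting $[A,X]=\nabla_A X-\nabla_X A$ and $[A,Y]=\nabla_A Y-\nabla_Y A$; the terms involving $\nabla_A X$ and $\nabla_A Y$ cancel. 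This is exactly the $2$-form counterpart of the computation behind \eqref{A.34}.

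Then I would evaluate $\hat\delta(\omega)(V)=\sum_{i\in\I}(\L_{A_i}\omega)(A_i,V)$ by taking $A=X=A_i$ and $Y=V$ in the formula above, which produces three sums:
$$\hat\delta(\omega)(V)=\sum_{i\in\I}(\nabla_{A_i}\omega)(A_i,V)+\sum_{i\in\I}\omega(\nabla_{A_i}A_i,V)+\sum_{i\in\I}\omega(A_i,\nabla_V A_i).$$
The first sum reproduces $\delta(\omega)(V)$: writing $A_i=\sum_j\<A_i,u_j\>u_j$ in both occurrences and using condition (a) in the form $\sum_i\<A_i,u_j\>\<A_i,u_k\>=\delta_{jk}$ collapses it to $\sum_j(\nabla_{u_j}\omega)(u_j,V)$, which is $\delta(\omega)(V)$ by \eqref{A.33}. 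The second sum vanishes by condition (b), since $\sum_i\nabla_{A_i}A_i=0$ forces $\omega(\sum_i\nabla_{A_i}A_i,V)=0$.

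The crux is the third sum $\sum_i\omega(A_i,\nabla_V A_i)$, which is absent in the $1$-form case and is precisely what forces condition (c) into the hypotheses. Here I would use that $\omega$ is antisymmetric, so $\omega(A_i,\nabla_V A_i)$ is the natural pairing of $\omega$ with the bivector $A_i\wedge\nabla_V A_i$; summing, the contribution equals the contraction of $\omega$ against $\sum_i A_i\wedge\nabla_V A_i$, which is zero by condition (c). Combining the three contributions yields $\hat\delta(\omega)=\delta(\omega)$. The main obstacle is conceptual rather than computational: one must recognize that this extra term pairs a $2$-form against exactly the bivector appearing in (c), so that (c) is the sharp condition needed to make the $2$-form case work; once this is seen, the remaining steps run parallel to Proposition \ref{prop3.1}.
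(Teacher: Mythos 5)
Your proof is correct and is essentially the paper's own argument: both reduce $(\L_{A_i}\omega)(A_i,V)$ to the three terms $(\nabla_{A_i}\omega)(A_i,V)+\omega(\nabla_{A_i}A_i,V)+\omega(A_i,\nabla_V A_i)$ via torsion-freeness of the connection, and then dispatch them using (a) together with \eqref{A.33}, (b), and (c) respectively. The only cosmetic difference is that you state the general $\nabla$-formula for $\L_A\omega$ up front, while the paper derives the same identity by applying the Leibniz rule to $\omega(A_i,V)$ and using $\L_{A_i}A_i=0$.
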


\begin{proof}
By \eqref{A.32}, we have
  \begin{equation*}
  \hat\delta(\omega)(V)=\sum_{i\in \I}(\L_{A_i}\omega)(A_i, V).
  \end{equation*}
Next,
  \begin{equation*}
  \begin{split}
  (\L_{A_i}\omega)(A_i, V)&=\L_{A_i}\bigl(\omega(A_i,V)\bigr)-\omega(A_i, \L_{A_i}V)\\
  &=(\nabla_{A_i}\omega)(A_i,V)+\omega(\nabla_{A_i}A_i,V)+\omega(A_i, \nabla_{A_i}V)-\omega(A_i, \L_{A_i}V)\\
  &=(\nabla_{A_i}\omega)(A_i,V)+\omega(\nabla_{A_i}A_i,V)+\omega(A_i, \nabla_V{A_i}),
  \end{split}
  \end{equation*}
since $\nabla_{A_i}V-\nabla_V A_i=\L_{A_i}V$. By condition (c), $\sum_{i\in \I}\omega(A_i, \nabla_V{A_i})=0.$ Summing over $i\in \I$ and according to (b) and \eqref{A.33}, we get the result.
\end{proof}

Now the opposite of de Rham--Hodge Laplacian operator $-\square =d\delta + \delta d$ admits the following decomposition (see \cite{ELL}):

\begin{theorem}\label{th3.1}
Under the conditions {\rm (a)}--{\rm (c)}, for any differential $1$-form $\omega$, we have
  \begin{equation}\label{A.35}
  \sum_{i\in \I} \L_{A_i}^2\omega=-\square\omega.
  \end{equation}
\end{theorem}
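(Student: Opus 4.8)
The plan is to prove the operator identity $\sum_{i\in\I}\L_{A_i}^2\omega=-\square\omega=(d\delta+\delta d)\omega$ for $1$-forms by expanding the left-hand side using Cartan's magic formula and the two preceding propositions, which have already identified $\hat\delta$ with the genuine divergence $\delta$ on $1$-forms and on $2$-forms. The starting point is the Cartan relation $\L_A=d\,I(A)+I(A)\,d$, valid on all forms. Iterating, I would write $\L_{A_i}^2\omega=\L_{A_i}\bigl(d\,I(A_i)\omega+I(A_i)\,d\omega\bigr)$, and then push the outer $\L_{A_i}$ through using the commutation $\L_{A_i}d=d\L_{A_i}$ from \eqref{A.31} together with Cartan's formula again on each piece. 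The goal is to reorganize the resulting terms so that the sum over $i$ collapses, via \eqref{A.32}, into expressions of the shape $d\,\hat\delta(\cdot)$ and $\hat\delta(d\,\cdot)$.

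More concretely, I would expand $\sum_i\L_{A_i}^2\omega$ and sort the terms according to how many exterior derivatives they carry. Using $\L_{A_i}=d\,I(A_i)+I(A_i)\,d$ on the inside and $\L_{A_i}d=d\L_{A_i}$ to move derivatives to the left, the first family of terms should assemble into $d\sum_i I(A_i)\L_{A_i}\omega=d\,\hat\delta(\omega)$, and the second into $\sum_i I(A_i)\L_{A_i}(d\omega)=\hat\delta(d\omega)$. Here $\omega$ is a $1$-form so $d\omega$ is a $2$-form; by Proposition \ref{prop3.1} the first piece equals $d\,\delta(\omega)$ and by Proposition \ref{prop3.2} the second equals $\delta(d\omega)$. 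Summing gives exactly $d\delta\omega+\delta d\omega=-\square\omega$, as desired.

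The main obstacle, and the step I would dwell on, is the bookkeeping of the iterated Cartan expansion: when I apply $\L_{A_i}=d\,I(A_i)+I(A_i)\,d$ twice there appear cross terms such as $d\,I(A_i)\,I(A_i)\,d\,\omega$ and $I(A_i)\,d\,d\,I(A_i)\,\omega$, and I must check that the genuinely unwanted contributions vanish identically. Two facts are decisive here: $d^2=0$ kills the terms carrying a repeated exterior derivative, and $I(A_i)\,I(A_i)=0$ (contracting twice with the same vector) kills the terms carrying a repeated interior product. After discarding these, the only survivors are precisely the two families identified above. I would verify this cancellation carefully rather than treat it as routine, since it is the hinge on which the clean factorization into $d\hat\delta+\hat\delta d$ turns.

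A subtle point worth flagging is that $\hat\delta$ as defined in \eqref{A.32} is built from the chosen family $\{A_i\}$ and agrees with $\delta$ only because of conditions (a)--(c), established in the two propositions; thus the decomposition \eqref{A.35} genuinely requires all three conditions, entering through the $1$-form identity (Proposition \ref{prop3.1}, using (a) and (b)) and the $2$-form identity (Proposition \ref{prop3.2}, using (a)--(c)). I would therefore keep track of which form-degree each $\hat\delta=\delta$ replacement is applied in, so that it is transparent where condition (c) is invoked. Once the algebra is organized this way, the conclusion is immediate and no further geometric input is needed.
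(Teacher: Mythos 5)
Your proposal is correct and follows essentially the same route as the paper: both rest on Cartan's formula together with the commutation $\L_{A_i}d=d\L_{A_i}$ to reduce $\sum_i\L_{A_i}^2\omega$ to $d\hat\delta(\omega)+\hat\delta(d\omega)$, and then invoke Propositions \ref{prop3.1} and \ref{prop3.2} to replace $\hat\delta$ by $\delta$ in degrees $1$ and $2$. The only difference is cosmetic: the paper applies Cartan's formula once to the outer Lie derivative, whereas you expand both factors and discard the cross terms via $d^2=0$ and $I(A_i)^2=0$, which yields the same two surviving families.
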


\begin{proof}
Applying Cartan's formula $\dis \L_{A_i}\omega=I(A_i)d\omega+ dI(A_i)\omega$ to $\L_{A_i}\omega$, we have
  \begin{equation*}
  \begin{split}
  \L_{A_i}^2\omega &= I(A_i)d\L_{A_i}\omega+dI(A_i)\L_{A_i}\omega\\
  &=I(A_i)\L_{A_i}(d\omega) + dI(A_i)\L_{A_i}\omega,
  \end{split}
  \end{equation*}
where we used \eqref{A.31} for the second equality. Now by Propositions \ref{prop3.1} and \ref{prop3.2}, we get
  \begin{equation*}
  \sum_{i\in \I} \L_{A_i}^2\omega=\delta d\omega + d\delta\omega=-\square\omega. \qedhere
  \end{equation*}
\end{proof}

Recall that on a Riemannian manifold, there is a one-to-one correspondence between the space of vector fields and that of differential 1-forms. Given a vector field $A$ (resp. differential 1-form $\theta$), we shall denote by $A^\flat$ (resp. $\theta^\sharp$) the corresponding differential 1-form (resp. vector field). The action of the de Rham--Hodge Laplacian $\square$ on the vector field $A$ is defined as follows:
  \begin{equation}\label{Hodge}
  \square A:=(\square A^\flat)^\sharp.
  \end{equation}

\begin{lemma}\label{3-lem}
The conditions {\rm (b)} and {\rm (c)} imply
\begin{equation}\label{A.36}
  \sum_{i\in \I} \div(A_i)A_i=0.
\end{equation}
\end{lemma}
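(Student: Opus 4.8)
The plan is to prove the vector identity componentwise in a local orthonormal frame, using condition (c) as the main engine and condition (b) only at the very end. Fix a point $x\in M$ and an orthonormal basis $\{u_1,\dots,u_n\}$ of $T_xM$. First I would rewrite the divergence in this frame as $\div(A_i)=\sum_{j=1}^n \langle \nabla_{u_j}A_i, u_j\rangle$, so that the $k$-th component of the vector field in question becomes
\begin{equation*}
\Big\langle \sum_{i\in\I}\div(A_i)A_i,\, u_k\Big\rangle=\sum_{i\in\I}\sum_{j=1}^n \langle \nabla_{u_j}A_i,u_j\rangle\,\langle A_i,u_k\rangle.
\end{equation*}
The goal is to show this vanishes for every $k$.

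Next I would unpack condition (c). Since $A_i\wedge\nabla_V A_i$ is a $2$-form whose $u_k\wedge u_l$-coefficient is $\langle A_i,u_k\rangle\langle \nabla_V A_i,u_l\rangle-\langle A_i,u_l\rangle\langle \nabla_V A_i,u_k\rangle$, condition (c) is equivalent to the scalar relations
\begin{equation*}
\sum_{i\in\I}\big[\langle A_i,u_k\rangle\langle \nabla_V A_i,u_l\rangle-\langle A_i,u_l\rangle\langle \nabla_V A_i,u_k\rangle\big]=0
\end{equation*}
holding for all $k,l$ and all vector fields $V$. I would then specialize $V=u_j$, set $l=j$, and sum over $j$; the first term reproduces exactly the $k$-component computed above, which yields
\begin{equation*}
\Big\langle \sum_{i\in\I}\div(A_i)A_i,\, u_k\Big\rangle=\sum_{i\in\I}\sum_{j=1}^n \langle A_i,u_j\rangle\,\langle \nabla_{u_j}A_i,u_k\rangle.
\end{equation*}

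Finally I would contract the inner sum over $j$ using $\sum_{j}\langle A_i,u_j\rangle\,u_j=A_i$ together with linearity of $\nabla$ in its direction argument, so that $\sum_j\langle A_i,u_j\rangle\nabla_{u_j}A_i$ collapses to $\nabla_{A_i}A_i$. Hence the right-hand side equals $\big\langle \sum_{i\in\I}\nabla_{A_i}A_i,\, u_k\big\rangle$, which vanishes by condition (b). Since $k$ and $x$ were arbitrary, this proves \eqref{A.36}. The one delicate point, and the main obstacle, is the index bookkeeping in the middle step: matching the \emph{antisymmetric} structure of the wedge relation to the \emph{symmetric} divergence expression forces the precise choice $V=u_j$, $l=j$ followed by summation over $j$, and one must verify that the surviving ``off-diagonal'' term is genuinely the contraction that reduces to $\nabla_{A_i}A_i$. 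Everything else is routine frame computation, and condition (a) is in fact not needed here.
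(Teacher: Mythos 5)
Your proof is correct and is essentially the paper's own argument: the paper contracts condition (c) by applying the interior product $I(v_j)$ to $\sum_i A_i\wedge\nabla_{v_j}A_i$ and summing over an orthonormal frame, which yields $\sum_i\nabla_{A_i}A_i-\sum_i\div(A_i)A_i=0$, exactly the identity you obtain componentwise by setting $V=u_j$, $l=j$ and summing. The only difference is presentational (vector-valued contraction versus pairing with $u_k$), so nothing further is needed.
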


\begin{proof}
We have $I(V)(A_i\wedge \nabla_V A_i)=\langle A_i,V\rangle\nabla_VA_i-\langle\nabla_VA_i,V\rangle A_i$. Let $\{v_1, \cdots, v_n\}$ be an orthonormal basis, then by condition (c),
  \begin{equation*}
  \begin{split}
  0&=\sum_{i\in \I}\sum_{j=1}^n \bigl(\langle A_i,v_j\rangle\nabla_{v_j}A_i-\langle\nabla_{v_j}A_i,v_j\rangle A_i\bigr)\\
  &=\sum_{i\in \I}\nabla_{A_i}A_i - \sum_{i\in \I} \div(A_i)A_i.
  \end{split}
  \end{equation*}
The first term vanishes by condition (b); therefore \eqref{A.36} follows.
\end{proof}

\begin{remark}
When the manifold $M$ is embedded in some $\R^m$, the relation \eqref{A.36} was proved in \cite[p.102]{Stroock}. However, in order to prove the next result, the equality \eqref{A.36} is \emph{not sufficient}; we have to assume the following condition:
\begin{itemize}
\item[\rm(d)] $\dis \sum_{i\in \I} \div(A_i)\L_{A_i}=0$.
\end{itemize}
Unfortunately the vector fields $\{A_1,\cdots, A_m\}$ in Example \ref{ex1} do not satisfy condition {\rm (d)}; see the Appendix.
\end{remark}

\begin{theorem}
Under {\rm (a)}, {\rm (b)}, {\rm (c)} and {\rm (d)}, we have, for any vector field $B$,
  \begin{equation}\label{A.37}
  -\square B=\sum_{i\in \I}\L_{A_i}^2 B.
  \end{equation}
\end{theorem}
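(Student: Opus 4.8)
The plan is to connect the action of $-\square$ on a vector field $B$, defined via \eqref{Hodge} as $(-\square B^\flat)^\sharp$, with the already-established decomposition \eqref{A.35} for $1$-forms. The natural strategy is to relate the Lie derivative $\L_{A_i} B$ of the vector field to the Lie derivative $\L_{A_i}(B^\flat)$ of the associated $1$-form. Since the musical isomorphisms $\flat$ and $\sharp$ are induced by the Riemannian metric, and the Levi--Civita connection $\nabla$ is metric-compatible, passing from forms to fields commutes with covariant differentiation; the obstruction is that $\L_{A_i}$ is \emph{not} the same as $\nabla_{A_i}$, and in particular the flat of a Lie derivative differs from the Lie derivative of the flat by a term involving $\nabla A_i$.

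First I would record the precise discrepancy. Using the torsion-free property, $\L_{A_i} B = \nabla_{A_i} B - \nabla_B A_i$, and for the $1$-form one has the Cartan-type identity $\L_{A_i}(B^\flat) = (\nabla_{A_i} B)^\flat + (B^\flat)\circ \nabla_{(\cdot)} A_i$, i.e.\ $(\L_{A_i} B^\flat)(V) = \langle \nabla_{A_i} B, V\rangle + \langle B, \nabla_V A_i\rangle$. Comparing these shows that $(\L_{A_i} B)^\flat$ and $\L_{A_i}(B^\flat)$ differ by a symmetric-looking correction built from $\nabla A_i$. The key is then to square these operators and sum over $i\in\I$. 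When I expand $\sum_i \L_{A_i}^2(B^\flat)$ and compare term-by-term with $\left(\sum_i \L_{A_i}^2 B\right)^\flt$, the cross terms will produce exactly expressions of the form $\sum_i \div(A_i)\L_{A_i}B$ (through the trace $\sum_i \langle \nabla_V A_i, \cdot\rangle$-type contractions that generate the divergence) together with curvature-free algebraic terms controlled by conditions (b) and (c).

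The decisive computation is to show that all the correction terms cancel. I expect the annihilation to proceed as follows: the terms quadratic in $\nabla A_i$ without a divergence factor should vanish by conditions (b) and (c) (the same mechanism as in Propositions~\ref{prop3.1} and \ref{prop3.2} and in Lemma~\ref{3-lem}), while precisely the terms linear in $\div(A_i)$ are killed by the newly imposed condition (d), $\sum_i \div(A_i)\L_{A_i}=0$. Once every correction term is eliminated, I obtain
\begin{equation*}
  \Bigl(\sum_{i\in\I}\L_{A_i}^2 B\Bigr)^{\!\flat}=\sum_{i\in\I}\L_{A_i}^2(B^\flat)=-\square(B^\flat),
\end{equation*}
where the second equality is Theorem~\ref{th3.1} applied to the $1$-form $\omega=B^\flat$. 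Applying $\sharp$ and invoking the definition \eqref{Hodge} then yields $\sum_{i\in\I}\L_{A_i}^2 B=(-\square B^\flat)^\sharp=-\square B$, which is \eqref{A.37}.

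The main obstacle is bookkeeping: correctly identifying, after squaring, which combinations of $\nabla A_i$ contractions assemble into $\sum_i \div(A_i)\L_{A_i}B$ versus into the curvature/algebraic terms governed by (b)--(c). The remark preceding the theorem emphasizes that \eqref{A.36} alone is \emph{not} enough, which signals that a genuine operator identity $\sum_i \div(A_i)\L_{A_i}=0$ (condition (d)) is forced, rather than its weaker pointwise consequence $\sum_i \div(A_i)A_i=0$. I would therefore be careful to isolate the full operator $\L_{A_i}$ (not merely its value on a fixed vector) when collecting the divergence-weighted terms, since that is exactly the distinction condition (d) is designed to handle.
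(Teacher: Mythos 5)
Your overall strategy is genuinely different from the paper's: you propose a \emph{pointwise} comparison of $\sum_i\L_{A_i}^2(B^\flat)$ with $\bigl(\sum_i\L_{A_i}^2B\bigr)^\flat$ via the musical isomorphisms, whereas the paper argues \emph{globally}, testing against an arbitrary $1$-form $\omega$: it expands $\L_{A_i}^2(\omega(B))=(\L_{A_i}^2\omega)(B)+\omega(\L_{A_i}^2B)+2(\L_{A_i}\omega)(\L_{A_i}B)$, integrates over $M$, converts the cross term by integration by parts into $-\int_M\div(A_i)\,\omega(\L_{A_i}B)\,\d x$ (which is exactly what condition (d) annihilates), uses $\int_M\Delta(\omega(B))\,\d x=0$ and Theorem \ref{th3.1}, and concludes by duality that $\square B=-\sum_i\L_{A_i}^2B$. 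The problem with your version is that the ``decisive computation'' is not carried out, and the way you predict the terms will organize does not match what actually happens. The exact discrepancy between the two Lie derivatives is $\L_{A}(B^\flat)-(\L_{A}B)^\flat=(\L_{A}g)(B,\cdot)$, where $g$ is the metric, so that after squaring and summing the correction is
\begin{equation*}
\sum_{i\in\I}\Bigl[\,2\,(\L_{A_i}g)(\L_{A_i}B,\cdot)+\bigl(\L_{A_i}(\L_{A_i}g)\bigr)(B,\cdot)\Bigr].
\end{equation*}
Nothing in this expression is a trace of $\nabla A_i$, so $\div(A_i)$ does not appear in it at all; in the paper's proof the divergence enters only through the integration by parts $\int_M\L_{A_i}f\,\d x=-\int_M\div(A_i)f\,\d x$, a mechanism your pointwise scheme has no access to. Your claim that condition (d) will kill ``the terms linear in $\div(A_i)$'' therefore has no identified target.

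The examples in Sections \ref{isotropic flow} make the mismatch concrete: there the $A_i$ are divergence free, so (d) is vacuous, yet $\L_{A_i}g\neq0$, so the entire correction above must vanish using (a)--(c) alone --- and this forces you to use \emph{differentiated} versions of those conditions (for instance, differentiating (a) and combining with (c) gives $\sum_i\<A_i,u\>\<\nabla_wA_i,v\>=0$ pointwise, and a second differentiation is needed for the terms quadratic in $\nabla A_i$ with no undifferentiated $A_i$ left, such as $\sum_i\<\nabla_{\nabla_BA_i}A_i,V\>$). None of this bookkeeping is done or even signalled in your proposal, and it is precisely the part that could fail. So while the statement you want to prove is a pointwise identity and a direct proof is not hopeless, as written the argument is a plan whose central cancellation is asserted rather than established; the paper's weak formulation avoids all of it and uses (d) exactly where it is designed to act.
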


\begin{proof}
Let $\omega$ be a differential $1$-form. We have
  $$\L_{A_i} (\omega(B))=(\L_{A_i}\omega)(B)+\omega (\L_{A_i}B),$$
and
  $$\L_{A_i}^2 (\omega(B))=(\L_{A_i}^2\omega)(B)+ \omega(\L_{A_i}^2B)+ 2 (\L_{A_i}\omega)(\L_{A_i}B).$$
By the integration by parts formula,
  \begin{equation*}
  \begin{split}
  \int_M (\L_{A_i}\omega)(\L_{A_i}B)\,\d x&=\int_M \L_{A_i}(\omega(\L_{A_i}B))\,\d x-\int_M \omega(\L_{A_i}^2B)\,\d x\\
  &=-\int_M \div(A_i)\omega(\L_{A_i}B)\,\d x-\int_M \omega(\L_{A_i}^2B)\,\d x.
  \end{split}
  \end{equation*}
Therefore,
  $$\int_M \L_{A_i}^2 (\omega(B))\,\d x=\int_M (\L_{A_i}^2\omega)(B)\,\d x- \int_M \omega(\L_{A_i}^2B)\,\d x -2 \int_M \div(A_i)\omega(\L_{A_i}B)\,\d x.$$
By condition (d), $\sum_{i\in \I}\int_M \div(A_i)\omega(\L_{A_i}B)\,\d x=0$. If we denote by $\tilde\square B=-\sum_{i\in \I} \L_{A_i}^2B$, then summing over $i$ and according to \eqref{A.35}, we get
  $$\int_M \Delta( \omega(B))\,\d x=-\int_M (\square\omega) (B)\,\d x + \int_M \omega (\tilde\square B)\,\d x.$$
It follows that $\int_M (\square\omega) (B)\,\d x= \int_M \omega (\tilde\square B)\,\d x$; therefore $\square B=\tilde\square B$.
\end{proof}

\begin{proposition}\label{prop3.3}
If $\div(B)=0$, then $\div(\square B)=0$.
\end{proposition}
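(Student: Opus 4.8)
The plan is to show that the de Rham--Hodge Laplacian commutes with the divergence operation on vector fields, so that divergence-freeness is preserved under $\square$. The cleanest route is to pass to the language of differential forms, where the relevant operators have clean algebraic relations. Recall that for a vector field $B$, being divergence free is equivalent to the co-closedness of its associated $1$-form: $\div(B)=0$ if and only if $\delta(B^\flat)=0$, where $\delta$ is the codifferential. Since $\square B=(\square B^\flat)^\sharp$ by the definition \eqref{Hodge}, the claim $\div(\square B)=0$ is equivalent to $\delta\big(\square (B^\flat)\big)=0$. So the task reduces to a statement purely about $1$-forms: if $\delta\omega=0$, then $\delta(\square\omega)=0$, where $\omega=B^\flat$.

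The key step is the fundamental fact that the de Rham--Hodge Laplacian $\square=dd^\ast+d^\ast d=d\delta+\delta d$ commutes with the codifferential $\delta$ (equivalently $d^\ast$). First I would recall why this holds: since $d\circ d=0$ and hence $\delta\circ\delta=0$ (as $\delta$ is, up to sign, $\ast d\ast$ and $d^2=0$), a direct computation gives
  $$\delta\,\square=\delta(d\delta+\delta d)=\delta d\delta,\qquad \square\,\delta=(d\delta+\delta d)\delta=\delta d\delta,$$
so that $\delta\,\square=\square\,\delta$ on forms of every degree. Applying this commutation to $\omega=B^\flat$ and using the hypothesis $\delta\omega=0$ yields
  $$\delta(\square\omega)=\square(\delta\omega)=\square(0)=0.$$
Translating back through the musical isomorphisms, $\div(\square B)=\delta\big((\square B)^\flat\big)=\delta(\square\omega)=0$, which is exactly the assertion.

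The only genuine subtlety, and the step I would be most careful with, is the translation between the vector-field formulation and the form formulation — specifically the identity $\div(B)=-\delta(B^\flat)$ (up to a sign convention) and the compatibility $(\square B)^\flat=\square(B^\flat)$, the latter being immediate from \eqref{Hodge}. Everything else is the standard Hodge-theoretic fact that $\delta$ and $\square$ commute, which rests only on $d^2=0$. I expect no analytic difficulty here, since $M$ is compact without boundary and the operators are the intrinsic Hodge operators; the whole proposition is essentially a one-line consequence of $[\square,\delta]=0$ once the dictionary between $\div$ on vector fields and $\delta$ on $1$-forms is set up correctly.
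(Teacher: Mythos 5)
Your argument is correct and is essentially the paper's own proof: both reduce to $1$-forms via $\div(B)=\delta(B^\flat)$ and $(\square B)^\flat=\square(B^\flat)$, and both use $\delta^2=0$ to get $\delta(\square B^\flat)=\delta d\delta(B^\flat)=0$. Your phrasing via the commutation $[\square,\delta]=0$ is just a slight repackaging of the same one-line computation.
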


\begin{proof}
Notice first that $\delta(B^\flat)=\div(B)=0$, then by \eqref{Hodge},
  $$\div(\square B)=\delta(\square B^\flat)=\delta d \delta(B^\flat)=0,$$
which completes the proof.
\end{proof}

In what follows, we consider the vector fields $\{A_i;\, i\in \I\}$ which satisfy the conditions (a)--(d). Let $W_t=\{W_t^i; \, i\in\I\}$ be a family of independent standard Brownian motions; consider the Stratonovich SDE on $M$:
  \begin{equation}\label{A.38}
  \d X_t=\sum_{i\in \I} A_i(X_t)\circ \d W_t^i + u_t(X_t)\,\d t,\quad X_0=x\in M.
  \end{equation}
Assume that $u_t\in C^{1,\alpha}$, then $X_t$ is a stochastic flow of $C^1$-diffeomorphisms of $M$. Let
  \begin{equation*}
  \d\big[(X_t)_\#(\d x)\big]=\rho_t\,\d x,\quad \d\big[(X_t^{-1})_\#(\d x)\big]=\tilde\rho_t\, \d x,
  \end{equation*}
where $(X_t)_\#(\d x)$ means the push-forward measure of $\d x$ by $X_t$. By \cite[Lemma 4.3.1]{Kunita2}, $\tilde\rho$ admits the expression
  \begin{equation}\label{A.39}
  \tilde \rho_t(x)=\exp\bigg\{-\sum_{i\in \I} \int_0^t \div(A_i)(X_s(x))\circ \d W_s^i  - \int_0^t \div(u_s)(X_s(x))\,\d s\bigg\}.
  \end{equation}
Since for any $f\in C(M)$, it holds
  \begin{equation*}
  \int_M f(x)\,\d x=\int_M f\big(X_t^{-1}(X_t)\big)\,\d x=\int_M f(X_t^{-1})\rho_t\,\d x=\int_M f\, \rho_t(X_t)\tilde\rho_t\,\d x,
  \end{equation*}
we have
  \begin{equation}\label{A.310}
  \rho_t(X_t)\tilde\rho_t=1.
  \end{equation}

Before stating the main result of this work, we introduce some notations. Let $f: M\ra M$ be a $C^1$-map, then for each $x\in M$, we have the linear operator $df(x): T_x M\to T_{f(x)}M$. We define the adjoint operator $(df)^\ast(x): T_{f(x)}M\to T_xM$ by
  \begin{equation*}
  \langle (df)^\ast (x)v, u\rangle_{T_xM}=\langle df(x)u, v\rangle_{T_{f(x)}M},\quad u\in T_{x}M, v\in T_{f(x)}M.
  \end{equation*}
Let $\omega$ be a differential $1$-form on $M$, the pull-back $f^\ast\omega$ of $\omega$ by $f$ is defined by
  $$\langle f^\ast\omega, u\rangle_x=\langle \omega_{f(x)}, df(x)u \rangle.$$

\begin{theorem}[Stochastic Lagrangian representation]\label{thm-manifold}
Let $M$ be a compact Riemannian manifold such that there is a family of vector fields $\{A_i;\, i\in \I\}$ satisfying the  conditions {\rm (a)}--{\rm (d)}. Let $\nu>0$ and $u_0$ be a divergence-free vector field on $M$. Assume that $u_t\in C^{2,\alpha}$. Then the pair $(X,u)$ satisfies
  \begin{equation}\label{SDE}
  \begin{cases}
  \d X_t=\sqrt{2\nu}\,\sum_{i\in \I} A_i(X_t)\circ \d W^i_t+u_t(X_t)\,\d t,\quad X_0=x,\\
  u_t=\E{\mathbf{P}}\Bigl[\rho_t\, (dX_t^{-1})^\ast\, u_0(X_t^{-1})\Bigr],
   \end{cases}
  \end{equation}
if and only if $u$ solves the Navier--Stokes equations on $M$:
  \begin{equation}\label{NSE-manifold}
  \begin{cases}
  \partial_t u+\nabla_u u+\nu\square u+\nabla p=0,\\
  \div(u)=0,\quad u|_{t=0}=u_0.
  \end{cases}
  \end{equation}
Moreover, $u_t$ has the following more geometric expression
  \begin{equation}\label{A.318}
  u_t=\E\Bigl[ {\mathbf P}\bigl(\rho_t\, (X_t^{-1})^\ast (u_0^\flat)\bigr)^{\sharp}\Bigr].
  \end{equation}
\end{theorem}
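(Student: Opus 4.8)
The plan is to mimic the alternative proof of Theorem \ref{thm-constantin} in Section \ref{Alternative-proof}, replacing the flat Laplacian by the decomposition \eqref{A.37} and carefully accounting for the fact that the flow generated by \eqref{SDE} need no longer preserve the volume. Throughout I test against an arbitrary divergence-free vector field $v$ on $M$. Since $\mathbf{P}$ is the $L^2$-orthogonal Leray--Hodge projection and $\mathbf{P}v=v$, the second line of \eqref{SDE} gives $\int_M\langle u_t,v\rangle\,\d x=\E\int_M\rho_t\,\langle (\d X_t^{-1})^\ast u_0(X_t^{-1}),v\rangle\,\d x$. Writing the integrand as $\rho_t(x)\,\langle u_0(X_t^{-1}(x)),\d X_t^{-1}(x)\,v(x)\rangle$, recognising $\d X_t^{-1}(x)v(x)=\big((X_t^{-1})_\ast v\big)(X_t^{-1}(x))$ through \eqref{A.1}, and changing variables by $x=X_t(y)$ so that the introduced factor $\tilde\rho_t$ combines with $\rho_t(X_t)$ via the density relation $\rho_t(X_t)\tilde\rho_t=1$ of \eqref{A.310}, I obtain the intrinsic identity
\begin{equation*}
\int_M\langle u_t,v\rangle\,\d x=\E\int_M\langle u_0,(X_t^{-1})_\ast v\rangle\,\d x,
\end{equation*}
the manifold analogue of \eqref{proof.1}. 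This step is exactly where the non-unimodular nature of the flow is absorbed by $\rho_t$ and $\tilde\rho_t$.

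Next I apply Kunita's formula for the pull-back vector field $(X_t^{-1})_\ast v$ along the flow \eqref{SDE}. In Stratonovich form its differential is $\sqrt{2\nu}\sum_i(X_t^{-1})_\ast(\L_{A_i}v)\circ\d W_t^i+(X_t^{-1})_\ast(\L_{u_t}v)\,\d t$; converting to It\^o produces the correction $\tfrac12\sum_i\d\langle\cdot,W^i\rangle$, which by a second application of Kunita equals $\nu\sum_i(X_t^{-1})_\ast(\L_{A_i}^2v)\,\d t$. Here the vector-field decomposition \eqref{A.37}, namely $\sum_i\L_{A_i}^2 v=-\square v$, turns this correction into $-\nu(X_t^{-1})_\ast(\square v)\,\d t$. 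Taking expectations annihilates the martingale part and yields
\begin{equation*}
\int_M\langle u_t,v\rangle\,\d x=\int_M\langle u_0,v\rangle\,\d x-\nu\,\E\!\int_0^t\!\!\int_M\!\langle u_0,(X_s^{-1})_\ast(\square v)\rangle\,\d x\,\d s+\E\!\int_0^t\!\!\int_M\!\langle u_0,(X_s^{-1})_\ast[u_s,v]\rangle\,\d x\,\d s.
\end{equation*}
Since $\square v$ is divergence-free by Proposition \ref{prop3.3} and $[u_s,v]$ is divergence-free by Lemma \ref{lem-1}, I feed each term back through the intrinsic identity to replace $\E\int\langle u_0,(X_s^{-1})_\ast(\cdot)\rangle$ by $\int\langle u_s,\cdot\rangle$, obtaining a closed equation in $u$.

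Differentiating in $t$, using the self-adjointness of $\square$ on $L^2$, and simplifying the nonlinear term via $\langle u_s,[u_s,v]\rangle=\langle u_s,\nabla_{u_s}v\rangle-\langle u_s,\nabla_v u_s\rangle$ together with $\int_M v(|u_s|^2)\,\d x=0$ (as $\div v=0$) and $\int_M\langle u_s,\nabla_{u_s}v\rangle\,\d x=-\int_M\langle\nabla_{u_s}u_s,v\rangle\,\d x$ (as $\div u_s=0$), I reach $\int_M\langle\partial_t u_t,v\rangle\,\d x=-\nu\int_M\langle\square u_t,v\rangle\,\d x-\int_M\langle\nabla_{u_t}u_t,v\rangle\,\d x$ for all divergence-free $v$. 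Because any gradient $\nabla p$ is $L^2$-orthogonal to divergence-free fields, this is precisely the weak form of \eqref{NSE-manifold}; moreover $\div u_t=0$ holds automatically as $u_t$ lies in the range of $\mathbf{P}$, and since $u_t\in C^{2,\alpha}$ the solution is classical. This is the forward implication. For the converse I run the same computation for $w_t:=\E\mathbf{P}[\rho_t(\d X_t^{-1})^\ast u_0(X_t^{-1})]$ built from the flow driven by a given solution $u$ of \eqref{NSE-manifold}: both $w$ and $u$ then satisfy the identical linear integral equation against all divergence-free $v$, so uniqueness of this linear advection--diffusion equation (a Gronwall estimate on $w-u$) forces $w=u$. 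Finally, the geometric expression \eqref{A.318} is a mere rewriting: from the definition of the pull-back $1$-form one checks $\big((X_t^{-1})^\ast u_0^\flat\big)^\sharp=(\d X_t^{-1})^\ast u_0(X_t^{-1})$, so \eqref{A.318} coincides with the second line of \eqref{SDE}.

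I expect the main obstacle to be the passage from the Stratonovich to the It\^o form of Kunita's pull-back formula and the exact identification of the second-order correction as $-\nu(X_t^{-1})_\ast(\square v)$: this is the one point where conditions {\rm(a)}--{\rm(d)} and the decomposition \eqref{A.37} are truly indispensable, and where the departure from the flat case (there $\rho_t\equiv1$ and $-\square=\Delta$) is concentrated. A secondary technical point is justifying the uniqueness invoked in the converse direction.
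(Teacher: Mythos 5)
Your proposal is correct and follows essentially the same route as the paper: derive the intrinsic identity $\int_M\langle u_t,v\rangle\,\d x=\E\int_M\langle u_0,(X_t^{-1})_\ast v\rangle\,\d x$ via $\rho_t(X_t)\tilde\rho_t=1$, apply Kunita's pull-back formula together with the decomposition \eqref{A.37}, close the equation using Lemma \ref{lem-1} and Proposition \ref{prop3.3}, and for the converse compare $u$ with the field built from the flow driven by $u$ and invoke uniqueness of the resulting linear equation (the paper phrases this as a heat-type equation for $z_t=u_t-\tilde u_t$ with $z_0=0$). The only cosmetic difference is that you verify \eqref{A.318} by a pointwise musical identity while the paper re-tests against divergence-free $v$; both are valid.
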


\begin{proof}
 Let $v$ be a divergence-free vector field on $M$. We have by \eqref{SDE} that
 \begin{equation*}
 \begin{split}
 \int_M \langle u_t, v\rangle\,\d x&=\E\int_M \rho_t\, \big\< (dX_t^{-1})^\ast\, u_0(X_t^{-1}),\, v\big\>\,\d x\\
 &=\E\int_M \rho_t\, \big\< (dX_t^{-1})\, v, \, u_0(X_t^{-1})\big\> \,\d x\\
 &=\E\int_M \rho_t(X_t)\, \tilde\rho_t\, \big\langle dX_t^{-1}(X_t)v(X_t), \, u_0\big\rangle\,\d x.
 \end{split}
 \end{equation*}
Now using \eqref{A.1} and \eqref{A.310}, we get the following expression, similar to \eqref{proof.1}:
  \begin{equation}\label{A.311}
  \int_M  \langle u_t, v\rangle\,\d x=\E\bigg(\int_M \big\langle u_0, (X_{t}^{-1})_\ast v \big\rangle\,\d x\bigg).
  \end{equation}
Again by \cite[p.265, Theorem 2.1]{Kunita} and \eqref{A.37}, we have
  \begin{align*}
  (X_{t}^{-1})_\ast v
  &=v+\sum_{i\in \I} \int_0^t (X_{s}^{-1})_\ast (\L_{A_i} v)\,\d W^i_s+\nu \sum_{i\in \I} \int_0^t (X_{s}^{-1})_\ast (\L_{A_i}^2 v)\,\d s
  +\! \int_0^t (X_{s}^{-1})_\ast (\L_{u_s}v)\, \d s\\
  &=v+\sum_{i\in \I} \int_0^t (X_{s}^{-1})_\ast(\L_{A_i} v)\,\d W^i_s-\nu  \int_0^t (X_{s}^{-1})_\ast(\square v)\,\d s
  +\int_0^t (X_{t}^{-1})_\ast (\L_{u_s}v)\, \d s.
  \end{align*}
Substituting $(X_{t}^{-1})_\ast v$ into \eqref{A.311}, we have
  \begin{equation*}
  \begin{split}
  \int_M  \langle u_t, v\rangle\,\d x=&\int_M \langle u_0, v\rangle\,\d x -\nu\,  \int_0^t \E\bigg(\int_M \big\langle u_0, (X_{s}^{-1})_\ast(\square v)\big\rangle\,\d x\bigg)\d s\\
  &+\int_0^t \E\bigg(\int_M \big\langle u_0,\, (X_{s}^{-1})_\ast(\L_{u_s}v)\big\rangle\,\d x\bigg) \d s.
  \end{split}
  \end{equation*}

Now by Lemma \ref{lem-1} and Proposition \ref{prop3.3}, $\L_{u_s}v$ and $\square v$
 are of divergence free.  Substituting respectively $v$  in \eqref{A.311} by  $\L_{u_s}v$ and $\square v$ yields
  \begin{equation}\label{thm-manifold.1}
  \int_M\<u_t,v\>\,\d x=\int_M\<u_0,v\>\,\d x-\nu \int_0^t\!\!\int_M \<u_s, \square v \>\,\d x\d s
  +\int_0^t\!\!\int_M \<u_s,\L_{u_s} v \>\,\d x\d s.
  \end{equation}
Since $M$ is torsion-free, we have $\L_{u_s} v=[u_s,v]=\nabla_{u_s} v- \nabla_v u_s$. As a result,
  \begin{equation}\label{thm-manifold.3} \aligned
  \int _M \<u_s,\L_{u_s} v \>\,\d x&=\int_M \<u_s,\nabla_{u_s} v \>\,\d x-\int_M \<u_s,\nabla_v u_s \>\,\d x\cr
  &=\int_M \<u_s,\nabla_{u_s} v \>\,\d x-\frac12 \int_M v(|u_s|^2)\,\d x
  =\int_M \<u_s,\nabla_{u_s} v \>\,\d x.
  \endaligned
  \end{equation}

By \eqref{thm-manifold.1} and \eqref{thm-manifold.3}, we know that for a.e. $t\geq 0$, it holds
  $$\frac{\d}{\d t}\int_M \<u_t,v\>\,\d x=-\nu \int_M \<u_t,\square v \>\,\d x  +\int_M \<u_t,\nabla_{u_t} v \>\,\d x.$$
Multiplying both sides by $\alpha\in C_c^1([0,\infty))$ and integrating by parts on $[0,\infty)$, we arrive at
  \begin{equation*}
  \alpha(0)\int_M\<u_0,v\>\,\d x+\int_0^\infty\!\!\int_M \big[\alpha'(t)\<u_t,v\> +\alpha(t) \<u_t,\nabla_{u_t} v \>
  -\nu\, \alpha(t)\<u_t,\square v\> \big] \d x\d t=0.
  \end{equation*}
The above equation is the weak formulation of the Navier--Stokes \eqref{NSE-manifold} on the manifold $M$. Since $u_t\in C^{2,\alpha}$, it is a strong solution to \eqref{NSE-manifold}.

For proving the converse, we use the idea in \cite[Theorem 2.3]{Zhang}. Let $u_t\in C^{2,\alpha}$ be a solution to \eqref{NSE-manifold}, then for any divergence free vector field $v$,
  \begin{equation}\label{thm-manifold.4}
  \int_M\<u_t,v\>\,\d x=\int_M\<u_0,v\>\,\d x-\nu \int_0^t\!\!\int_M \<u_s, \square v\>\,\d x\d s -\int_0^t\!\!\int_M \<\nabla_{u_s} u_s, v\>\,\d x\d s.
  \end{equation}
Note that
  $$\int_M \<\nabla_{u_s} u_s, v \>\,\d x= \int_M u_s\<u_s, v \>\,\d x -\int_M \<u_s,\nabla_{u_s} v \>\,\d x = -\int_M \<u_s,\nabla_{u_s} v \>\,\d x,$$
which plus \eqref{thm-manifold.3} and \eqref{thm-manifold.4} yields
  \begin{equation*}
  \int_M\<u_t,v\>\,\d x=\int_M\<u_0,v\>\,\d x-\nu \int_0^t\!\!\int_M \<u_s, \square v\>\,\d x\d s  +\int_0^t\!\!\int_M \<u_s,\L_{u_s} v\>\,\d x\d s.
  \end{equation*}
Consider the SDE in  \eqref{SDE} with drift term $u_t$. Define
  \begin{equation*}
  \tilde u_t=\E{\mathbf{P}}\bigl[\rho_t\, (dX_t^{-1})^\ast\, u_0(X_t^{-1})\bigr].
  \end{equation*}
Then the same proof for \eqref{thm-manifold.1} leads to
  \begin{equation*}
  \int_M\<\tilde u_t,v\>\,\d x=\int_M\<u_0,v\>\,\d x-\nu \int_0^t\!\!\int_M \<\tilde u_s, \square v\>\,\d x\d s
  +\int_0^t\!\!\int_M \<\tilde u_s,\L_{u_s} v\>\,\d x\d s.
  \end{equation*}
Let $z_t=u_t-\tilde u_t$; we have
  \begin{equation*}
  \int_M\<z_t,v\>\,\d x=- \nu \int_0^t\!\!\int_M \<z_s, \square v\>\,\d x\d s +\int_0^t\!\!\int_M \<z_s,\L_{u_s} v\>\,\d x\d s.
  \end{equation*}
It follows that $(z_t)$ solves the following heat equation on $M$:
  \begin{equation*}
  \frac{\d z_t}{\d t}=-\nu\,\square z_t - \L_{u_t}^*z_t,\quad z_0=0,
  \end{equation*}
 where $\L_{u_t}^*$ is the adjoint operator.
By uniqueness of solutions, we get that $z_t=0$ for all $t\geq 0$. Thus $u_t=\tilde u_t$.

To prove \eqref{A.318}, we note that
  \begin{align*}
  \int_M \rho_t\, \big\< (X_t^{-1})^\ast (u_0^\flat), v\big\>\,\d x&=\int_M \rho_t\, \big\< u_0^\flat, (X_t^{-1})_\ast v\big\>_{X_t^{-1}}\,\d x\\
  &=\int_M \rho_t(X_t)\,\tilde\rho_t\, \big\< u_0^\flat, (X_t^{-1})_\ast v\big\> \,\d x\\
  &=\int_M \big\< u_0^\flat, (X_t^{-1})_\ast v\big\> \,\d x=\int_M \big\< u_0, (X_{t}^{-1})_\ast v \big\>_{T_x M}\,\d x,
  \end{align*}
where we have used \eqref{A.310} in the third equality. Now by \eqref{A.311}, for any vector field $v$ of divergence free, we have
  $$ \int_M \langle u_t, v\rangle\,\d x=\E\bigg(\int_M \rho_t\, \big\langle (X_t^{-1})^\ast(u_0^\flat), v\big\rangle \,\d x\bigg).$$
Then \eqref{A.318} follows and the proof of Theorem \ref{thm-manifold} is complete.
\end{proof}

\section{Riemannian symmetric spaces}\label{symmetric}

It is usually difficult to find on a general Riemannian manifold a family of vector fields $\{A_i;\, i\in\I\}$ of {\it divergence free}, which satisfy the conditions (a)--(d) in Section \ref{Extension}. In this section, we will treat the case of symmetric spaces.

Let $M$ be a compact Riemannian manifold which is assumed to be symmetric, that is, for each $m\in M$, there is an involutive isometric mapping $s_m$ of $M$ having $m$ as an isolated fixed point. More precisely, $s_m$ is a diffeomorphism of $M$ such that the metric of $M$ is invariant under $s_m$ and $s_m^2$ is the identity map of $M$. Then (see \cite[p.170]{H})
  \begin{equation}\label{S0}
  ds_m(m)=-\id \quad \mbox{on} \ T_mM.
  \end{equation}
Such a map $s_m$ is unique,  sends the geodesic $\gamma(t)$ passing through $m$ to the geodesic $\gamma(-t)$.

Let $G=I_0(M)$ be the identity component of the group of isometries of $M$. Then $G$ has a differential structure to become a Lie group (see \cite[Lemma 3.2, p.171]{H}). Fix a point $m_0\in M$; let $K$ be the subgroup of $G$ such that
  $$K=\{g\in G;\ g(m_0)=m_0\}.$$
Then $K$ is a compact subgroup and the homogeneous space $G/K$ is diffeomorphic to $M$ under the map $[g]=gK \to g(m_0)$. Consider the automorphism $\sigma: G\ra G$ defined by
  \begin{equation}\label{S1}
  g\ra s_{m_0}\circ g\circ s_{m_0}.
  \end{equation}
For simplicity, we denote by $e\in G$ the identity map of $M$. Then $\sigma(e)=e$ and $\sigma^2=\id$ on $G$. Consider the subgroup of fixed point of $\sigma$:
  \begin{equation*}
  K_\sigma=\{g\in G; \sigma(g)=g\}.
  \end{equation*}
For $g\in K$, we see that $\sigma(g)(m_0)=m_0$ and $d\sigma(g)(m_0)=dg(m_0)$ by \eqref{S0} and \eqref{S1}, so that these two isometries $g$ and $\sigma(g)$ coincide (see \cite[Lemma 11.2, p.62]{H}). Thus the following relation holds:
  \begin{equation*}
  K_0\subset K\subset K_\sigma,
  \end{equation*}
where $K_0$ is the identity component of $K_\sigma$.

Let $\G$ be the Lie algebra of $G$; then $d\sigma(e): \G\ra \G$ is an involution. Let
  \begin{equation*}
  \K=\{\xi\in\G;\ d\sigma(e)\,\xi=\xi\},\quad \M=\{\xi\in\G;\ d\sigma(e)\,\xi=-\xi\}.
  \end{equation*}
Then $\G$ is a direct sum of $\K$ and $\M$: $\G=\K\oplus\M$. For any $g\in G$, we denote by ${\rm ad}_g: G\to G$ the inner automorphism and $\Ad_g:\G\to \G$ its differential.

\begin{proposition}\label{propS1} We have
  \begin{equation}\label{S2}
  d\sigma(e)\, [\xi, \eta]=[d\sigma(e)\, \xi, d\sigma(e)\,\eta]\quad \mbox{for any } \xi, \eta\in\G.
  \end{equation}
\end{proposition}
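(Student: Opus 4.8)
The plan is to read \eqref{S2} as the classical fact that the differential at the identity of a Lie group automorphism is a Lie algebra automorphism, and to deduce it by exploiting how $\sigma$ interacts with the inner automorphisms $\ad_g$, whose differentials $\Ad_g$ are already available. First I would record that $\sigma$ is a smooth group automorphism of $G$: it is the restriction to $G$ of conjugation by the isometry $s_{m_0}$, it preserves the identity component $G=I_0(M)$ as noted above, it satisfies $\sigma(e)=e$ and $\sigma^2=\id$, and it is multiplicative since $\sigma(g_1g_2)=s_{m_0}g_1g_2s_{m_0}=(s_{m_0}g_1s_{m_0})(s_{m_0}g_2s_{m_0})=\sigma(g_1)\sigma(g_2)$. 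The key point is that $\sigma$ carries inner automorphisms to inner automorphisms: for all $g,h\in G$,
\[
\sigma\big(\ad_g(h)\big)=\sigma\big(ghg^{-1}\big)=\sigma(g)\,\sigma(h)\,\sigma(g)^{-1}=\ad_{\sigma(g)}\big(\sigma(h)\big),
\]
i.e.\ $\sigma\circ\ad_g=\ad_{\sigma(g)}\circ\sigma$ as maps $G\to G$.

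Next I would differentiate this identity at $e$. Using $\sigma(e)=e$, $\ad_g(e)=e$ and the chain rule, and recalling that $\Ad_g=d(\ad_g)(e)$, one obtains the Lie-algebra-level relation
\[
d\sigma(e)\circ\Ad_g=\Ad_{\sigma(g)}\circ d\sigma(e),\qquad g\in G.
\]
To convert $\Ad$ into the bracket, substitute $g=\exp(t\xi)$ and apply both sides to $\eta$. Since $t\mapsto\sigma(\exp(t\xi))$ is a one-parameter subgroup with initial velocity $d\sigma(e)\xi$, it equals $\exp\!\big(t\,d\sigma(e)\xi\big)$, so the relation becomes $d\sigma(e)\,\Ad_{\exp(t\xi)}\eta=\Ad_{\exp(t\,d\sigma(e)\xi)}\,d\sigma(e)\eta$. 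Recalling that the bracket on $\G$ is recovered as $[\xi,\eta]=\frac{\d}{\d t}\big|_{t=0}\Ad_{\exp(t\xi)}\eta$, differentiating in $t$ at $t=0$ gives $d\sigma(e)[\xi,\eta]=[d\sigma(e)\xi,\,d\sigma(e)\eta]$, which is exactly \eqref{S2}.

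As for difficulties, there is no genuine analytic obstacle: this is the standard statement that a Lie group homomorphism differentiates to a Lie algebra homomorphism. The only matters needing care are bookkeeping—confirming that $\sigma$ is an honest smooth automorphism of $G$ (guaranteed by the $s_{m_0}$-conjugation description and the preservation of $I_0(M)$), and justifying the two successive differentiations (in $h$ at $e$, then in $g$ along $\exp(t\xi)$), which is routine since $\ad$, $\Ad$, $\exp$ and $\sigma$ are all smooth. I note that an equally clean alternative avoids the double differentiation entirely: from $\sigma\circ L_g=L_{\sigma(g)}\circ\sigma$ (left translations) one shows that each left-invariant vector field associated with $\xi$ is $\sigma$-related to the one associated with $d\sigma(e)\xi$, and then \eqref{S2} follows from the naturality of the Lie bracket under $\sigma$-related vector fields, evaluated at $e$.
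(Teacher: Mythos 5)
Your proof is correct and takes essentially the same route as the paper's: both differentiate the conjugation identity $\sigma(ghg^{-1})=\sigma(g)\,\sigma(h)\,\sigma(g)^{-1}$ first in $h$ at $e$ and then in $g$ along $\exp(t\xi)$, the paper organizing this as two computations of the mixed derivative of $\Phi(t,s)=\sigma\big(\exp(t\xi)\exp(s\eta)\exp(-t\xi)\big)$ while you phrase the intermediate step as the operator identity $d\sigma(e)\circ\Ad_g=\Ad_{\sigma(g)}\circ d\sigma(e)$. The auxiliary fact $\sigma(\exp(t\xi))=\exp\big(t\,d\sigma(e)\xi\big)$ that you invoke is justified by the same one-parameter-subgroup argument the paper uses in Proposition \ref{propS2}, so there is no gap.
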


\begin{proof} Let $\exp:\G\to G$ be the exponential map. Consider the map
  $$\Phi(t,s)=\sigma\big(\exp(t\xi)\exp(s\eta)\exp(-t\xi)\big),\quad t, s\in\R.$$
We have $\Phi(t,0)=e$ and
  \begin{equation*}
  \frac{\d}{\d s}\Big|_{s=0}\Phi(t,s)=d\sigma(e)\, \Ad_{\exp(t\xi)}(\eta).
  \end{equation*}
Therefore
  \begin{equation*}
  \frac{\d}{\d t}\Big|_{t=0} \frac{\d}{\d s}\Big|_{s=0} \Phi(t,s)=d\sigma(e)\, [\xi, \eta].
  \end{equation*}
On the other hand,
  $$\Phi(t,s)=\sigma(\exp(t\xi))\,\sigma(\exp(s\eta))\, \sigma(\exp(-t\xi))= \ad_{\sigma(\exp(t\xi))} (\sigma(\exp(s\eta))).$$
The same calculation yields
  \begin{equation*}
  \frac{\d}{\d t}\Big|_{t=0} \frac{\d}{\d s}\Big|_{s=0} \Phi(t,s)=[d\sigma(e)\, \xi, d\sigma(e)\eta].
  \end{equation*}
The relation \eqref{S2} follows.
 \end{proof}

By \eqref{S2}, it is obvious that
  \begin{equation}\label{S3}
  [\K,\K]\subset \K,\ [\M,\M]\subset\K,\ [\K,\M]\subset \M.
  \end{equation}

\begin{proposition}\label{propS2}
We have
\begin{itemize}
\item[\rm(i)]  $\sigma(\exp(t\xi))=\exp(t\xi)$ for $\xi\in\K$,
\item[\rm(ii)] $\sigma(\exp(t\xi))=\exp(-t\xi)$ for $\xi\in\M$.
\end{itemize}
\end{proposition}

\begin{proof} Let $\xi\in\K$ and $\varphi(t)= \sigma(\exp(t\xi))$; then $\varphi(t)\varphi(s)=\varphi(t+s)$ for $t,s\in\R$. Hence $\{\varphi(t); t\in\R\}$ is a one-parameter subgroup of $G$ such that $\varphi(0)=e$ and $\varphi'(0)=d\sigma(e)\,\xi=\xi$. Therefore $\varphi(t)=\exp(t\xi)$ and we get (i). The same proof also works for (ii).
\end{proof}

As a corollary of this result, $\M$ is invariant under $\Ad_K$. In fact, for any $h\in K,\, \xi\in\M$, we have
  \begin{equation*}
  \sigma\big( \exp(t \Ad_h(\xi))\big)=\sigma\big( h \exp(t\xi) h^{-1}\big)=\sigma(h) \sigma (\exp(t\xi))\sigma(h)^{-1}=h \exp(-t\xi) h^{-1}.
  \end{equation*}
Taking the derivative with respect to $t$ at $t=0$, we get $\dis d\sigma(e)\, \Ad_h(\xi)=-\Ad_h(\xi)$. Therefore $\Ad_h(\xi)\in\M$. Similarly, we can show that $\K$ is $\Ad_K$-invariant.

For any $\xi\in \K$, the assertion (i) implies that $\exp(t\xi)\in K_\sigma$ for all $t\in\R$. Then $s_{m_0}\exp(t\xi)(m_0)= \exp(t\xi)(m_0)$ for all $t\in\R$. Since $m_0$ is the isolated fixed point of $s_{m_0}$, we have $\exp(t\xi)(m_0)=m_0$ for $t\in \R$. We see in fact that $\exp(t\xi)\in K_0\subset K$ and $\K$ is the Lie algebra of $K$.
Now we consider the map $\pi: G\ra M$ defined by $\pi(g)=g(m_0)$. Then
$$\dis d\pi(e): \G \ra T_{m_0}M.$$
For $\xi\in\M$, the curve $\gamma(t)=\exp(t\xi)(m_0)$ is the geodesic on $M$ starting from $m_0$ such that $\gamma'(0)=d\pi(e)\,\xi$. Moreover, $\K= \textup{Ker}(d\pi(e))$ and $d\pi(e): \M\ra T_{m_0}M$ is an isomorphism (see \cite[p.173]{H}).

Now for $\xi\in \G$, we define
  \begin{equation}\label{S4}
  A_\xi(m)= \frac{\d}{\d t}\Big|_{t=0} \exp(t\xi)(m),\quad m\in M.
  \end{equation}
The vector field $A_\xi$ is a Killing vector field on $M$; in fact $\exp(t\xi): M\ra M$ is an isometry which leaves the metric of $M$ invariant. Let $\d g$ be the Haar measure on $G$ and $\d m=\pi_\# \d g$. Then for any $\xi\in\G$ and $f\in C^1(M)$
  $$\int_M f(\exp(t\xi)(m))\, \d m=\int_G f(\exp(t\xi)\, g(m_0))\, \d g=\int_G f(g(m_0))\, \d g.$$
Taking the derivative with respect to $t$, at $t=0$, we get
  \begin{equation}\label{S5}
  \int_M A_\xi f (m)\,\d m=0, \quad \textup{for } f\in C^1(M).
  \end{equation}
In other words, $\div (A_\xi)=0$. If we denote $R_m(g)=g(m)$ for $m\in M$, then $A_\xi(m)= dR_m(e)\, \xi$. The dependence $\xi\ra A_\xi$ is linear from $\G$ to $\mathcal X(M)$, where $\mathcal X(M)$ is the space of vector fields on $M$.

\begin{proposition}\label{propS3} We have for $\xi,\eta\in\G$,
\begin{equation}\label{S6}
A_{[\xi,\eta]}=-[A_\xi, A_\eta].
\end{equation}
\end{proposition}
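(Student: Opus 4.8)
The plan is to identify the flow generated by the Killing field $A_\xi$, compute the Lie derivative $\L_{A_\xi}A_\eta$ via the pull-back formula \eqref{A.1}, recognize the result as $A_\zeta$ for a suitable $\zeta\in\G$ using the linearity of $\xi\mapsto A_\xi$, and finally read off the sign. Since $\L_{A_\xi}A_\eta=[A_\xi,A_\eta]$, this will give \eqref{S6}. First I would check that the flow of $A_\xi$ is exactly $\psi_t(m)=\exp(t\xi)(m)$: because $\exp((t+s)\xi)=\exp(s\xi)\exp(t\xi)$, differentiating $s\mapsto\exp(s\xi)(\psi_t(m))$ at $s=0$ yields $\frac{\d}{\d t}\psi_t(m)=A_\xi(\psi_t(m))$ with $\psi_0=\id$, so $\psi_t$ is the flow of $A_\xi$ and $\psi_t^{-1}=\exp(-t\xi)$ as a map of $M$.

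Next I would evaluate the pull-back appearing in the definition of the Lie derivative. By \eqref{A.1},
\[
\big((\psi_t^{-1})_\ast A_\eta\big)(m)=(d\psi_t(m))^{-1}A_\eta(\psi_t(m))=d\exp(-t\xi)\big(\psi_t(m)\big)\,A_\eta\big(\exp(t\xi)(m)\big).
\]
Writing $A_\eta(\exp(t\xi)(m))=\frac{\d}{\d s}\big|_{s=0}\exp(s\eta)\exp(t\xi)(m)$ and pushing this velocity through $d\exp(-t\xi)$ recombines the three isometries into the single group element $h_s=\exp(-t\xi)\exp(s\eta)\exp(t\xi)$ acting on $m$. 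Using $A_\zeta(m)=dR_m(e)\,\zeta$ and $\frac{\d}{\d s}\big|_{s=0}h_s=\Ad_{\exp(-t\xi)}\eta$, this gives
\[
\big((\psi_t^{-1})_\ast A_\eta\big)(m)=\frac{\d}{\d s}\Big|_{s=0}R_m(h_s)=dR_m(e)\,\frac{\d}{\d s}\Big|_{s=0}h_s=A_{\Ad_{\exp(-t\xi)}\eta}(m),
\]
so that $(\psi_t^{-1})_\ast A_\eta=A_{\Ad_{\exp(-t\xi)}\eta}$ as vector fields.

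Finally, differentiating at $t=0$ and invoking the linearity of $\zeta\mapsto A_\zeta$,
\[
\L_{A_\xi}A_\eta=\frac{\d}{\d t}\Big|_{t=0}A_{\Ad_{\exp(-t\xi)}\eta}=A_{\frac{\d}{\d t}|_{t=0}\Ad_{\exp(-t\xi)}\eta}=A_{[-\xi,\eta]}=-A_{[\xi,\eta]},
\]
where $\frac{\d}{\d t}\big|_{t=0}\Ad_{\exp(t\zeta)}\eta=[\zeta,\eta]$ is exactly the convention used in the proof of Proposition \ref{propS1}. Since $\L_{A_\xi}A_\eta=[A_\xi,A_\eta]$, this is precisely \eqref{S6}. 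The delicate point will be the bookkeeping of the differentials: getting the inverse $(d\psi_t(m))^{-1}=d\exp(-t\xi)(\psi_t(m))$ to recombine with $A_\eta(\psi_t(m))$ into a single $\Ad$-conjugation, and then tracking the minus sign that makes $\xi\mapsto A_\xi$ a Lie algebra \emph{anti}-homomorphism for the left action of $G$ on $M$.
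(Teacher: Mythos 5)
Your proof is correct and follows essentially the same route as the paper: both identify the pull-back of $A_\eta$ under the flow $\exp(t\xi)$ with $A_{\Ad_{\exp(\mp t\xi)}(\eta)}$ by recombining the differentials into a single conjugated one-parameter subgroup, and then differentiate in $t$ using $\frac{\d}{\d t}\big|_{t=0}\Ad_{\exp(t\xi)}\eta=[\xi,\eta]$. The only (cosmetic) difference is that the paper conjugates by $\exp(t\xi)$ and lands on $\L_{A_{-\xi}}A_\eta$, whereas you conjugate by $\exp(-t\xi)$ and read off $\L_{A_\xi}A_\eta$ directly; the sign bookkeeping is handled correctly in both.
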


\begin{proof} Consider $\Psi(t,s)=\exp(t\xi) \exp(s\eta) \exp(-t\xi)(m)$ for $m\in M$ and $t, s\in\R$. We have $ \Psi(t,s)=\exp\big({s\Ad_{\exp(t\xi)}(\eta})\big)(m)$. Thus,
  \begin{equation*}
  \frac{\d}{\d s}\Big|_{s=0} \Psi(t,s)=A_{\Ad_{\exp(t\xi)}(\eta)}(m),\quad
  \frac{\d}{\d t}\Big|_{t=0} \frac{\d}{\d s}\Big|_{s=0} \Psi(t,s)=A_{[\xi,\eta]}(m).
  \end{equation*}
On the other hand,
  \begin{equation*}
  \frac{\d}{\d s}\Big|_{s=0} \Psi(t,s)=d\exp(t\xi)\big(\exp(-t\xi)(m)\big)\, A_\eta\big(\exp(-t\xi)(m)\big)=\big((\exp(t\xi))_\ast A_\eta\big) (m).
  \end{equation*}
Therefore
  \begin{equation*}
  \frac{\d}{\d t}\Big|_{t=0} \frac{\d}{\d s}\Big|_{s=0} \Psi(t,s)= \big({\mathcal L}_{A_{-\xi}}A_\eta\big)(m)=-[A_\xi, A_\eta](m).
  \end{equation*}
The result follows.
\end{proof}

\begin{proposition}\label{propS4} We have, for any $\xi\in\G$, $g\in G$ and $m\in M$,
  \begin{equation}\label{S7}
  A_\xi(g(m))=dg(m)\, A_{\Ad_{g^{-1}}(\xi)}(m),\quad\textup{or}\quad (g^{-1})_*A_\xi= A_{\Ad_{g^{-1}}(\xi)}.
  \end{equation}
\end{proposition}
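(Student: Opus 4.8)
The plan is to exploit the interplay between the exponential map and the inner automorphism, reducing the claim to the elementary identity $(dg(m))^{-1}=dg^{-1}(g(m))$ together with the chain rule. The starting observation is that $\Ad_{g^{-1}}$ is, by definition, the differential at $e$ of the inner automorphism $\ad_{g^{-1}}:h\mapsto g^{-1}hg$ fixed earlier in this section.

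First I would rewrite the right-hand side of \eqref{S7} using the definition \eqref{S4}. Since $\ad_{g^{-1}}$ is a group automorphism, the curve $t\mapsto g^{-1}\exp(t\xi)\,g$ is a one-parameter subgroup of $G$ whose generator at $t=0$ is $\Ad_{g^{-1}}(\xi)$; hence
$$\exp\big(t\,\Ad_{g^{-1}}(\xi)\big)=g^{-1}\exp(t\xi)\,g,\quad t\in\R.$$
Evaluating the induced action on $m\in M$ and using that the action of a product is the composition of the individual actions, this gives $\exp\big(t\,\Ad_{g^{-1}}(\xi)\big)(m)=g^{-1}\big(\exp(t\xi)(g(m))\big)$.

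Next I would differentiate at $t=0$. Because $g^{-1}$ does not depend on $t$, the chain rule and \eqref{S4} yield
$$A_{\Ad_{g^{-1}}(\xi)}(m)=dg^{-1}(g(m))\,\frac{\d}{\d t}\Big|_{t=0}\exp(t\xi)(g(m))=dg^{-1}(g(m))\,A_\xi(g(m)).$$
Since $g^{-1}\circ g=\id$, differentiating at $m$ gives $dg^{-1}(g(m))\,dg(m)=\id$, that is, $dg^{-1}(g(m))=(dg(m))^{-1}$. Substituting this and applying $dg(m)$ to both sides produces the first identity $A_\xi(g(m))=dg(m)\,A_{\Ad_{g^{-1}}(\xi)}(m)$. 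Comparing with the pull-back formula \eqref{A.1} (with $\varphi=g$), the relation $(dg(m))^{-1}A_\xi(g(m))=\big((g^{-1})_\ast A_\xi\big)(m)$ then yields the second, equivalent form $(g^{-1})_\ast A_\xi=A_{\Ad_{g^{-1}}(\xi)}$.

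I do not expect a genuine obstacle here, as the argument is a direct computation. The only point requiring care is the bookkeeping of conventions: namely that the action on $M$ satisfies $(g_1g_2)(m)=g_1(g_2(m))$, so that $g^{-1}\exp(t\xi)g$ acts on $M$ as the stated composition, and that $\Ad_{g^{-1}}$ in \eqref{S7} is the differential of the inner automorphism $\ad_{g^{-1}}$. Once these are pinned down, the cancellation $dg^{-1}(g(m))=(dg(m))^{-1}$ closes the proof.
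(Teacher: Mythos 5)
Your argument is correct and is essentially the paper's proof: both rest on the identity $\exp\big(t\,\Ad_{g^{-1}}(\xi)\big)=g^{-1}\exp(t\xi)\,g$ followed by differentiation at $t=0$ and the chain rule. The only (cosmetic) difference is that the paper writes $\exp(t\xi)(g(m))=g\bigl(\exp\big(t\Ad_{g^{-1}}(\xi)\big)(m)\bigr)$ and differentiates directly, so $dg(m)$ appears without your extra step of inverting $dg^{-1}(g(m))$.
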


\begin{proof} The first relation in \eqref{S7} follows by taking derivative at $t=0$ of the equality below:
  \begin{equation*}
  \exp(t\xi)(g(m))=\big(g\circ g^{-1}\circ \exp(t\xi)\circ g\big)(m)= g\bigl( \exp\big(t\Ad_{g^{-1}}(\xi)\big)(m)\bigr).
  \end{equation*}
The second one deduces from the first one.
\end{proof}

Now we need an inner product on $\G$ which is $\Ad_G$-invariant such that $\K\perp\M$. The Killing form $B$ will play such role. For $\xi\in\G$, we denote by $\ad(\xi)(\eta)=[\xi, \eta]$ which defines a linear map from $\G$ to $\G$. The Killing form is defined by
  $$ B(\xi, \eta)={\rm Tr}( \ad(\xi)\circ\ad(\eta) ),\quad \xi,\eta\in\G.$$
Using \eqref{S2}, we have
  $$ \ad(d\sigma(e)\, \xi)\circ \ad(d\sigma(e)\, \eta)=d\sigma(e)\circ \ad(\xi)\circ \ad(\eta)\circ d\sigma(e)^{-1},$$
which implies that $B(d\sigma(e)\xi, d\sigma(e)\eta)=B(\xi,\eta)$. Therefore
  $$B(\xi,\eta)=0\quad\textup{if  } \xi\in\K,\, \eta\in\M.$$

In the sequel we assume that $-B$ is positive definite on $\G\times\G$, which is the case if $G$ is compact and semi-simple. In what follows, we will denote by
  $$\langle\xi,\eta\rangle_\G=-B(\xi,\eta),\quad \xi,\eta\in\G.$$
We shall transport the metric $\langle\ ,\, \rangle_\G$ on $\G$ to $T_{m_0}M$ by $d\pi(e)$. Define
  \begin{equation}\label{S8}
  \langle A_\xi, A_\eta\rangle_{m_0}=\langle \xi,\eta\rangle_\G\quad\textup{for } \xi, \eta\in \M.
  \end{equation}
Equivalently,
  \begin{equation}\label{S9}
  \langle A_\xi, A_\eta\rangle_{m_0}=\langle P_\M(\xi), P_\M(\eta)\rangle_\G\quad\textup{for } \xi, \eta\in \G,
  \end{equation}
where $P_\M$ is the projection from $\G$ onto $\M$. Note that if $h\in K$, $dh(m_0)$ is an isometric transform of $ T_{m_0}M$. According to \eqref{S7}, for $\xi, \eta\in\M$,
  \begin{equation*}
  \begin{split}
  &\big\langle dh(m_0)A_\xi(m_0), dh(m_0)A_\eta(m_0)\big\rangle_{m_0}
  =\big\langle A_{\Ad_h(\xi)},A_{\Ad_h(\eta)}\big\rangle_{m_0}\\
  &=\langle \Ad_h(\xi), \Ad_h(\eta)\rangle_{\G} =\langle \xi, \eta\rangle_{\G} =\langle A_\xi, A_\eta\rangle_{m_0}.
  \end{split}
  \end{equation*}
Therefore $\langle\  ,\,  \rangle_{m_0}$ will define a Riemannian metric on $M$ which is $G$-invariant.

Now let $m\in M$ with $m=g(m_0)$. For any $u\in T_m M$, there is a unique $v\in T_{m_0}M$ such that $u=dg(m_0)v$. Furthermore, me can take $\xi_0\in \M$ such that $v= A_{\xi_0}(m_0)$. We have $|u|_m =|v|_{m_0} =|\xi_0|_{\G}$. For any $\xi\in \G$, by \eqref{S7},
  $$\<A_\xi(m), u\>_m =\<A_\xi(g(m_0)), u\>_m =\big\<dg(m_0) A_{\Ad_{g^{-1}}(\xi)}(m_0), dg(m_0) A_{\xi_0}(m_0)\big\>_m.$$
As $dg(m_0): T_{m_0} M\to T_{m} M$ is an isometry, we have
  $$\<A_\xi(m), u\>_m =\big\<A_{\Ad_{g^{-1}}(\xi)}(m_0), A_{\xi_0}(m_0)\big\>_{m_0} = \big\<\Ad_{g^{-1}}(\xi), \xi_0\big\>_{\G}.$$
Let $\{\xi_1, \cdots, \xi_n\}$ be an orthonormal basis of $\G$. Since $\langle\ ,\, \rangle_\G$ is assumed to be $\Ad_G$-invariant, $\big\{\Ad_{g^{-1}}(\xi_1),\cdots, \Ad_{g^{-1}}(\xi_n) \big\}$ is again an orthonormal basis of $\G$. Therefore,
  \begin{equation}\label{S10}
  \sum_{i=1}^n \langle A_{\xi_i}, u\rangle_m^2 =\sum_{i=1}^n \big\<\Ad_{g^{-1}}(\xi_i), \xi_0\big\>_{\G}^2 =|\xi_0|_\G^2= |u|_m^2, \quad\textup{for  } u\in T_mM.
  \end{equation}
Thus we see that the Killing vector fields $\{A_{\xi_1}, \cdots, A_{\xi_n}\}$ satisfy the condition (a). To verify the conditions (b) and (c), we need some more preparations.

\begin{proposition}\label{propS5} Let $\nabla$ be the associated Levi--Civita connection on $M$, then at $m_0$, we have
for $\xi, \eta, \zeta\in\G$ that
  \begin{equation}\label{S11}
  \big\<\nabla_{A_\xi} A_\eta, A_\zeta\big\> = \frac12 \big(\<[\zeta, \xi], P_{\M}\eta\> - \<[\xi, \eta], P_{\M} \zeta\> - \<[\eta, \zeta], P_{\M}\xi\> \big).
  \end{equation}
\end{proposition}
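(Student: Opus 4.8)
The plan is to apply Koszul's formula for the Levi--Civita connection and to reduce each of its six terms to a purely Lie-algebraic expression at $m_0$, exploiting the $G$-invariance of the metric together with Propositions \ref{propS3} and \ref{propS4}. Recall Koszul's identity: for smooth vector fields $X,Y,Z$,
\begin{equation*}
2\langle\nabla_X Y, Z\rangle = X\langle Y,Z\rangle + Y\langle X,Z\rangle - Z\langle X,Y\rangle + \langle[X,Y],Z\rangle - \langle[X,Z],Y\rangle - \langle[Y,Z],X\rangle .
\end{equation*}
I would take $X=A_\xi$, $Y=A_\eta$, $Z=A_\zeta$ and evaluate both sides at $m_0$.

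The three bracket terms are the routine part. By Proposition \ref{propS3} we have $[A_\xi,A_\eta]=-A_{[\xi,\eta]}$, and by the definition \eqref{S9} of the metric together with the orthogonality $\K\perp\M$ (so that $\langle P_\M\alpha, P_\M\beta\rangle_\G=\langle\alpha,P_\M\beta\rangle_\G$), one finds $\langle[A_\xi,A_\eta],A_\zeta\rangle_{m_0}=-\langle[\xi,\eta],P_\M\zeta\rangle_\G$, and analogously for the other two brackets obtained by permuting $\xi,\eta,\zeta$.

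The derivative terms are the crux of the argument. To compute $A_\xi\langle A_\eta,A_\zeta\rangle$ at $m_0$, I would differentiate the function $f(m)=\langle A_\eta(m),A_\zeta(m)\rangle_m$ along the flow $\exp(t\xi)$ of $A_\xi$. Since each $\exp(t\xi)$ is an isometry, pulling the metric back to $m_0$ and invoking \eqref{S7} gives $f(\exp(t\xi)(m_0))=\langle P_\M\Ad_{\exp(-t\xi)}\eta,\,P_\M\Ad_{\exp(-t\xi)}\zeta\rangle_\G$; differentiating at $t=0$ with $\frac{\d}{\d t}\big|_{t=0}\Ad_{\exp(-t\xi)}=-\ad(\xi)$ yields $A_\xi\langle A_\eta,A_\zeta\rangle|_{m_0}=\langle[\eta,\xi],P_\M\zeta\rangle_\G+\langle[\zeta,\xi],P_\M\eta\rangle_\G$, and the two remaining derivative terms follow by the corresponding relabelling.

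Finally I would substitute all six expressions into Koszul's identity. Using the antisymmetry $[\alpha,\beta]=-[\beta,\alpha]$, the two contributions proportional to $\langle[\xi,\eta],P_\M\zeta\rangle_\G$ coming from the first two derivative terms cancel, and collecting the remainder reproduces exactly $\langle[\zeta,\xi],P_\M\eta\rangle_\G-\langle[\xi,\eta],P_\M\zeta\rangle_\G-\langle[\eta,\zeta],P_\M\xi\rangle_\G$ after dividing by $2$, which is \eqref{S11}. I expect the derivative terms to be the main obstacle: one must carefully use the isometry property of $\exp(t\xi)$ to convert the Riemannian derivative into the $\Ad$-derivative, and then track precisely the signs (arising both from the $-\ad(\xi)$ factor and from the antisymmetry of the bracket) as well as the projections $P_\M$ throughout the final cancellation.
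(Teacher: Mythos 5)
Your proof is correct, and it reaches \eqref{S11} by a route that is parallel to, but organized differently from, the paper's. The paper first isolates a derivative-free identity valid for arbitrary Killing fields, namely $\<\nabla_X Y,Z\>=\tfrac12\big(\<[X,Y],Z\>+\<[Y,Z],X\>-\<[Z,X],Y\>\big)$, obtained by combining the skew-symmetry $\<\nabla_Y X,Z\>+\<\nabla_Z X,Y\>=0$ of a Killing field with torsion-freeness; only then does it specialize to $X=A_\xi$, $Y=A_\eta$, $Z=A_\zeta$ and convert the brackets via Proposition \ref{propS3} and the metric formula \eqref{S9}. You instead start from the full Koszul formula and evaluate the three derivative terms explicitly at $m_0$, using the $G$-invariance of the metric and \eqref{S7} to write $\langle A_\eta,A_\zeta\rangle(\exp(t\xi)(m_0))=\langle P_\M\Ad_{\exp(-t\xi)}\eta,P_\M\Ad_{\exp(-t\xi)}\zeta\rangle_\G$ and differentiating with $\tfrac{\d}{\d t}\big|_{t=0}\Ad_{\exp(-t\xi)}=-\ad(\xi)$. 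I checked the bookkeeping: the derivative terms sum to $2\<[\zeta,\xi],P_\M\eta\>-2\<[\eta,\zeta],P_\M\xi\>$, the bracket terms to $-\<[\xi,\eta],P_\M\zeta\>-\<[\zeta,\xi],P_\M\eta\>+\<[\eta,\zeta],P_\M\xi\>$, and the total is exactly twice the right-hand side of \eqref{S11}, so your cancellation claim is accurate. What the paper's route buys is a cleaner intermediate statement that holds at every point of $M$ and for any Killing fields, with no need to compute the derivative terms at all; what your route buys is that the isometry property is used in its Lie-theoretic form ($\Ad$-invariance of the metric on $\G$), which makes the computation self-contained at $m_0$ and makes visible that the derivative terms do not vanish individually but combine correctly. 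Both are valid; the paper's is slightly more economical.
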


\begin{proof}
We first show that for any Killing vector fields $X, Y$ and $Z$ on $M$, it holds
  $$\<\nabla_X Y,Z\>=\frac12 \big(\<[X,Y],Z\>+ \<[Y,Z],X\> -\<[Z,X],Y\>\big).$$
Since $X$ is a Killing vector field, we have
  $$\<\nabla_Y X, Z\>+\<\nabla_Z X, Y\>=0.$$
Combining this identity with $[X,Y]=\nabla_X Y- \nabla_Y X$ yields that
  $$\<\nabla_X Y, Z\> + \<\nabla_Z X, Y\>=\<[X,Y],Z\>.$$
As $Y$ and $Z$ are also Killing vector fields, we obtain in the same way that
  $$\<\nabla_Y Z, X\> + \<\nabla_X Y, Z\>=\<[Y,Z],X\>$$
and
  $$\<\nabla_Z X, Y\> + \<\nabla_Y Z, X\>=\<[Z,X],Y\>.$$
Adding the first two equalities and subtracting the third one give us the desired result.

Now for any $\xi,\eta,\zeta\in\G$, applying the above result leads to
  $$\big\<\nabla_{A_\xi} A_\eta, A_\zeta\big\>=\frac12 \big(\<[A_\xi, A_\eta], A_\zeta\>+ \<[A_\eta, A_\zeta], A_\xi\> -\<[A_\zeta, A_\xi], A_\eta\>\big).$$
According to \eqref{S6}, this equality can be rewritten as
  $$\aligned \big\<\nabla_{A_\xi} A_\eta, A_\zeta\big\>&= \frac12 \big(-\<A_{[\xi, \eta]}, A_\zeta\> - \<A_{[\eta, \zeta]}, A_\xi\> +\<A_{[\zeta, \xi]}, A_\eta\>\big)\\
  & =\frac12 \big(\<[\zeta, \xi], P_{\M}\eta\> - \<[\xi, \eta], P_{\M} \zeta\> - \<[\eta, \zeta], P_{\M}\xi\> \big),\endaligned$$
where in the second step we have used \eqref{S9}.
\end{proof}

\begin{corollary}\label{RSS-cor}
For any $\xi,\eta\in\M$, it holds
  \begin{equation}\label{S11.1}
  \nabla_{A_\xi} A_\eta(m_0)=0.
  \end{equation}
\end{corollary}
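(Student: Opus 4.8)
The plan is to read off the result directly from the explicit formula for $\langle \nabla_{A_\xi} A_\eta, A_\zeta\rangle$ at $m_0$ established in Proposition \ref{propS5}, by showing that each of its three terms vanishes once $\xi,\eta$ are restricted to $\M$.

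First I would reduce the statement to a pairing computation. Since $A_\zeta(m_0)=d\pi(e)\zeta$ and, as recalled above, $d\pi(e)$ restricts to an isomorphism of $\M$ onto $T_{m_0}M$ (with $\K=\mathrm{Ker}(d\pi(e))$), the vectors $\{A_\zeta(m_0):\zeta\in\M\}$ already span $T_{m_0}M$. Hence it suffices to prove that $\langle \nabla_{A_\xi} A_\eta(m_0), A_\zeta(m_0)\rangle=0$ for every $\zeta\in\M$, and this will force $\nabla_{A_\xi} A_\eta(m_0)=0$.

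Next I would substitute $\xi,\eta,\zeta\in\M$ into \eqref{S11}. Because $P_\M$ acts as the identity on $\M$, the right-hand side collapses to $\tfrac12\bigl(\langle[\zeta,\xi],\eta\rangle-\langle[\xi,\eta],\zeta\rangle-\langle[\eta,\zeta],\xi\rangle\bigr)$. The structural input that finishes the argument is the bracket relation $[\M,\M]\subset\K$ from \eqref{S3}: it places each of $[\zeta,\xi]$, $[\xi,\eta]$, $[\eta,\zeta]$ in $\K$, while the second slot in each pairing, namely $\eta$, $\zeta$, $\xi$, lies in $\M$. Since $\K$ and $\M$ are orthogonal for $\langle\cdot,\cdot\rangle_\G=-B$ (the Killing form vanishes between $\K$ and $\M$, as shown earlier), all three inner products are zero. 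Therefore $\langle\nabla_{A_\xi} A_\eta, A_\zeta\rangle(m_0)=0$ for all $\zeta\in\M$, and the spanning observation yields \eqref{S11.1}.

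Every step here is a direct substitution, so I do not anticipate a genuine obstacle; the only point deserving a moment's care is the reduction in the first paragraph, that is, justifying that vanishing against all $A_\zeta(m_0)$ with $\zeta\in\M$ already detects the zero vector. This rests precisely on the isomorphism $d\pi(e)\colon\M\to T_{m_0}M$ together with $\K=\mathrm{Ker}(d\pi(e))$, both of which are available from the preceding discussion.
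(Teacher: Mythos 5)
Your proposal is correct and follows exactly the paper's argument: the paper likewise writes an arbitrary $v\in T_{m_0}M$ as $A_\zeta(m_0)$ with $\zeta\in\M$ and concludes from \eqref{S11} together with $[\M,\M]\subset\K$ and the orthogonality $\K\perp\M$ that the pairing vanishes. You have merely spelled out the substitution into \eqref{S11} in more detail than the paper does.
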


\begin{proof}
For any $v\in T_{m_0} M$, there is $\zeta\in \M$ such that $v= A_{\zeta}(m_0)$. By \eqref{S11} and \eqref{S3}, it is clear that $\big\<\nabla_{A_\xi} A_\eta, v\big\>_{m_0} =0$. The arbitrariness of $v\in T_{m_0} M$ implies the desired result.
\end{proof}

From now on, we assume that $\{\xi_1, \cdots,\xi_d\}$  is an orthonormal basis of $\M$ and $\{\xi_{d+1}, \cdots,\xi_n\}$  is an orthonormal basis of $\K$, then by \eqref{S11.1},
  \begin{equation}\label{S12}
  \sum_{i=1}^n \nabla_{A_{\xi_i}}A_{\xi_i}(m_0)=0,
  \end{equation}
since $A_{\xi_i}(m_0)=0$ for $i\in \{d+1,\cdots, n\}$.

In order to transfer the above property from the base point $m_0$ to any point $m\in M$, we use the fact that the affine connection enjoys the following relation (see \cite[Chap. 1]{H}): for any vector field $X$ on $M$,
  \begin{equation}\label{S13}
  \bigl[\nabla_{dg(m_0)v}(g_\ast X)\bigr](g(m_0))=dg(m_0) (\nabla_v X)(m_0),\quad v\in T_{m_0}M.
  \end{equation}
Therefore  replacing $X$ in \eqref{S13} by $(g^{-1})_\ast A_\xi$, we get
  \begin{equation}\label{S14}
  \bigl(\nabla_{dg(m_0)v}A_\xi\bigr) (g(m_0))=dg(m_0)\bigl[\nabla_v \bigl((g^{-1})_\ast A_\xi\bigr)\bigr](m_0).
  \end{equation}
Let $m\in M$ with $m=g(m_0)$. By \eqref{S7},
  \begin{equation*}
  \bigl(\nabla_{A_\xi}A_\xi\bigr)(m)=\bigl(\nabla_{dg(m_0)v}A_\xi\bigr)(g(m_0)),\quad\textup{where  } v=A_{\Ad_{g^{-1}} (\xi)}(m_0).
  \end{equation*}
Again by the second formula in \eqref{S7} and \eqref{S14}, we get
  \begin{equation}\label{S14.1}
  \bigl(\nabla_{A_\xi}A_\xi\bigr)(m)=dg(m_0)\Bigl[\nabla_{A_{\Ad_{g^{-1}}(\xi)}} A_{\Ad_{g^{-1}}(\xi)}\Bigr](m_0).
  \end{equation}

Recall that $\big\{\Ad_{g^{-1}}(\xi_1),\cdots, \Ad_{g^{-1}}(\xi_n) \big\}$ is also an orthonormal basis of $\G$, hence there is an orthogonal matrix $U=(u_{ij})$ of order $n$ such that
  \begin{equation}\label{S14.5}
  \Ad_{g^{-1}}(\xi_i)=\sum_{j=1}^n u_{ij} \xi_j,\quad i=1,\cdots, n.
  \end{equation}
Combining \eqref{S14.1} and \eqref{S14.5} yields that
  $$\aligned \sum_{i=1}^n \big(\nabla_{A_{\xi_i}}A_{\xi_i}\big)(m)&= \sum_{i=1}^n\sum_{j,k=1}^n u_{ij} u_{ik}\, dg(m_0)\bigl(\nabla_{A_{\xi_j}} A_{\xi_k}\bigr)(m_0)\\
  &= \sum_{j=1}^n dg(m_0)\bigl(\nabla_{A_{\xi_j}} A_{\xi_j}\bigr)(m_0)=0, \endaligned$$
where the last equality follows from \eqref{S12}.

It remains to check condition (c) in Section \ref{Extension}.  By Corollary \ref{RSS-cor}, it is clear that for any $v\in T_{m_0} M$ and $\xi\in\M$, we have $\nabla_v A_\xi(m_0)=0$. Therefore, by the choice of $\{\xi_1,\cdots,\xi_n\}$,
  \begin{equation}\label{S15}
  \sum_{i=1}^n \langle A_{\xi_i}, v_1\rangle_{m_0}\langle \nabla_{v_3}A_{\xi_i}, v_2\rangle_{m_0}=0,   \quad\textup{for any  } v_1, v_2, v_3\in T_{m_0}M.
  \end{equation}
Now for $m=g(m_0)$, $u_j=dg(m_0)v_j\in T_m M, \, j=1, 2, 3$. Applying \eqref{S7} and \eqref{S14}, we get
  \begin{equation*}\aligned
  \langle A_\xi, u_1\rangle_m \big\langle \nabla_{u_3}A_\xi, u_2\big\rangle_m
  &=\big\langle A_{\Ad_{g^{-1}}(\xi)}(m_0), v_1\big\rangle_{m_0} \big\langle \nabla_{v_3}\big((g^{-1})_\ast A_\xi\big)(m_0), v_2\big\rangle_{m_0}.
  \endaligned\end{equation*}
Using the second assertion of  \eqref{S7}, we arrive at
  $$\langle A_\xi, u_1\rangle_m \big\langle \nabla_{u_3}A_\xi, u_2\big\rangle_m= \big\langle A_{\Ad_{g^{-1}} (\xi)}(m_0),v_1\big\rangle_{m_0} \big\langle \nabla_{v_3} A_{\Ad_{g^{-1}} (\xi)}(m_0),  v_2\big\rangle_{m_0}.$$
Therefore, applying this equality to $\xi=\xi_i$ and by \eqref{S14.5}, \eqref{S15}, we finally get
  \begin{equation}\label{S16}
  \sum_{i=1}^n \langle A_{\xi_i}, u_1\rangle_m \langle \nabla_{u_3}A_{\xi_i}, u_2\rangle_m = 0, \quad \mbox{for any } u_1, u_2, u_3\in T_mM.
  \end{equation}
This immediately implies the condition (c). Summing up the above discussions, we have proved

\begin{theorem} Let $M$ be a compact symmetric Riemannian manifold and $\G$ the Lie algebra of the group of isometries of $M$. Assume that the minus Killing form $-B$ on $\G$ defines an inner product, and the orthonormal basis  $\{\xi_1, \ldots, \xi_n\}$ of $\G$ fulfils $\M={\rm span}\{\xi_1, \cdots, \xi_d\}$ and $\K={\rm span}\{\xi_{d+1}, \cdots, \xi_n\}$. Then the family of vector fields $\{A_{\xi_1}, \cdots, A_{\xi_n}\}$ enjoy properties {\rm (a)}--{\rm (d)} in Section \ref{Extension} for the metric induced by $-B$.
\end{theorem}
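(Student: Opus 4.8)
The plan is to verify the four conditions (a)--(d) one at a time, drawing on the structural facts already assembled in this section: the $\Ad_G$-invariance of the inner product $\langle\cdot,\cdot\rangle_\G=-B$, the symmetric-space bracket relations \eqref{S3}, the homogeneity $M\cong G/K$ together with the $G$-invariance of the Levi--Civita connection, and the fact \eqref{S5} that every Killing field $A_\xi$ is divergence free. The unifying idea throughout is that a tensorial identity holding at the base point $m_0$ can be transported to an arbitrary $m=g(m_0)$ by conjugating with the isometry $dg(m_0)$ and using that $\Ad_{g^{-1}}$ sends the orthonormal basis $\{\xi_1,\ldots,\xi_n\}$ of $\G$ to another orthonormal basis.

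I would dispose of conditions (a) and (d) first. Condition (a) has in effect already been obtained in \eqref{S10}: writing $m=g(m_0)$ and $u=dg(m_0)A_{\xi_0}(m_0)$ with $\xi_0\in\M$, formula \eqref{S7} rewrites $\langle A_{\xi_i}(m),u\rangle_m$ as $\langle \Ad_{g^{-1}}(\xi_i),\xi_0\rangle_\G$, and since $\{\Ad_{g^{-1}}(\xi_i)\}$ is again orthonormal, the Parseval identity gives $\sum_i\langle A_{\xi_i},u\rangle_m^2=|\xi_0|_\G^2=|u|_m^2$. Condition (d), $\sum_i \div(A_{\xi_i})\L_{A_{\xi_i}}=0$, is then immediate, since \eqref{S5} shows each $\div(A_{\xi_i})=0$ and the left-hand side vanishes term by term; this is precisely the point at which the symmetric-space construction, unlike the gradient system of Example \ref{ex1}, satisfies the delicate condition (d).

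The crux is conditions (b) and (c), and both rest on Corollary \ref{RSS-cor}, which gives $\nabla_{A_\xi}A_\eta(m_0)=0$ for $\xi,\eta\in\M$ (a consequence of the Killing-field formula of Proposition \ref{propS5} and the relation $[\M,\M]\subset\K$ from \eqref{S3}). Since every $v\in T_{m_0}M$ equals some $A_\zeta(m_0)$ with $\zeta\in\M$, this also yields $\nabla_v A_\xi(m_0)=0$ for $\xi\in\M$. At the base point the two sums then vanish term by term: the $\M$-indices ($i\le d$) contribute zero through these covariant-derivative vanishings, while the $\K$-indices ($i>d$) contribute zero because $A_{\xi_i}(m_0)=0$. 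This yields \eqref{S12} for (b) and \eqref{S15} for (c). I would then transfer each identity to a general point $m=g(m_0)$ using the $G$-invariance of the connection \eqref{S13}--\eqref{S14.1} and expanding $\Ad_{g^{-1}}(\xi_i)$ in the fixed basis via the orthogonal matrix \eqref{S14.5}; the relations $\sum_i u_{ij}u_{ik}=\delta_{jk}$ collapse the transported double sum back to the base-point sum, giving \eqref{S16}, whence (c) follows because $(A_i\wedge\nabla_V A_i)(u_1,u_2)$ is the antisymmetrization of the quantity appearing in \eqref{S16}.

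The main obstacle is exactly this transfer from $m_0$ to an arbitrary $m$: for a single index $i$ the quantity $\nabla_{A_{\xi_i}}A_{\xi_i}(m)$ need not vanish, so one cannot argue pointwise term by term as at $m_0$. The vanishing is a property of the \emph{full} sum over an orthonormal basis, and the mechanism preserving it is the $\Ad_G$-invariance of $-B$: conjugation by $dg(m_0)$ turns the base-point sum into a sum over the rotated---but still orthonormal---basis $\{\Ad_{g^{-1}}(\xi_i)\}$, which the orthogonal change of variables \eqref{S14.5} identifies with the original sum. Once (a)--(d) are checked at every $m\in M$, the theorem is established.
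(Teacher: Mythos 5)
Your proposal is correct and follows essentially the same route as the paper: condition (a) via \eqref{S7}, \eqref{S10} and the $\Ad_G$-invariance of $-B$; conditions (b) and (c) by first establishing the pointwise vanishing at $m_0$ through Corollary \ref{RSS-cor} (with the $\K$-indices contributing zero because $A_{\xi_i}(m_0)=0$) and then transporting to $m=g(m_0)$ via \eqref{S13}--\eqref{S14.5} and the orthogonality relations $\sum_i u_{ij}u_{ik}=\delta_{jk}$; and condition (d) trivially from $\div(A_\xi)=0$ in \eqref{S5}. Your closing observation that the vanishing at a general point is a property of the full orthonormal sum rather than of individual terms is exactly the mechanism the paper exploits.
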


The following explicit example of the unit sphere is taken from \cite[Chap. 9, Example 4.2]{ChenWH}.

\begin{example} {\rm
Recall that the special orthogonal group ${\rm SO}(n+1)$ consists of orthogonal matrix of order $n+1$ whose determinant is 1. It is a connected compact Lie group. Let
  $$s=\begin{pmatrix} I_n &\ 0\\
  0 &\ -1
  \end{pmatrix},$$
where $I_n$ is the identity matrix of order $n$. Then $s^2=I_{n+1}$, that is $s^{-1}=s$. Define $\sigma:{\rm SO}(n+1)\to {\rm SO}(n+1)$ as follows:
  $$\sigma(U)=s\, Us, \quad U\in {\rm SO}(n+1).$$
$\sigma$ is an involution on ${\rm SO}(n+1)$, i.e. $\sigma^2={\rm id.}$ Assume that $U\in {\rm SO}(n+1)$ satisfies $\sigma(U)=U$, that is $s\,U=Us$, then $U$ must have the form
  $$U=\begin{pmatrix} V &\ 0\\
  0 &\ \det V
  \end{pmatrix}, \quad V\in {\rm O}(n),$$
where $\det V$ is the determinant of $V$ and ${\rm O}(n)$ is the orthogonal group of order $n$. Therefore, the subgroup of ${\rm SO}(n+1)$ consists of the fixed points of $\sigma$ is
  $$K_\sigma=\left\{\begin{pmatrix} V &\ 0\\
  0 &\ \det V
  \end{pmatrix};\, V\in {\rm O}(n)\right\} \cong {\rm O}(n),$$
which is also a closed subgroup of ${\rm SO}(n+1)$, hence a compact subgroup. The identity component of $K_\sigma$ is
  $$K_0=\left\{\begin{pmatrix} V &\ 0\\
  0 &\ 1
  \end{pmatrix};\, V\in {\rm SO}(n)\right\} \cong {\rm SO}(n).$$

The Lie algebra of ${\rm SO}(n+1)$ is
  $${\rm so}(n+1)=\left\{\begin{pmatrix} X &\ a\\
  -a^{ \top} &\ 0
  \end{pmatrix};\, a\in\R^n, X^{\top}=-X \right\}, $$
where $a^{\top}$ is the transposition of $a\in\R^n$, and that of $K_0$ is
  $$\K=\left\{\begin{pmatrix} X &\ 0\\
  0 &\ 0
  \end{pmatrix};\, X^{\top}=-X \right\}. $$
The involution on ${\rm so}(n+1)$ induced by $\sigma$ is
  $$d\sigma(I_{n+1}) (\tilde X)=s\tilde X s,\quad \tilde X\in {\rm so}(n+1).$$
Hence
  $$\mathcal M=\big\{\tilde X\in {\rm so}(n+1): d\sigma(I_{n+1}) (\tilde X)=-\tilde X\big\}= \left\{\begin{pmatrix} 0 &\ a\\
  -a^{\top} &\ 0
  \end{pmatrix};\, a\in\R^n\right\} \cong \R^n.$$
It is known that the Killing form on ${\rm so}(n+1)$ is given by (see \cite[p.266]{KN})
  \begin{equation}\label{sphere-1}
  B\big(\tilde X,\tilde Y\big)=(n-1){\rm Tr}\big(\tilde X \tilde Y\big),\quad \tilde X, \tilde Y\in {\rm so}(n+1),
  \end{equation}
which is ${\rm Ad}_{{\rm SO}(n+1)}$-invariant.

We explain now the geometric meaning of ${\rm SO}(n+1)/{\rm SO}(n)$. Let $U\in {\rm SO}(n+1)$. Then the column vectors $u_1,\cdots, u_{n+1}$ of  $U$ constitute an orthonormal basis of $\R^{n+1}$. The left coset $[U]=U\cdot K_0$ is a collection of orthonormal bases of $\R^{n+1}$:
  $$[U]=\big\{(\tilde u_1,\cdots, \tilde u_{n+1})\in {\rm SO}(n+1): \tilde u_{n+1}=u_{n+1}\big\} = \left\{U\cdot \begin{pmatrix} V &\ 0\\
  0 &\ 1
  \end{pmatrix};\, V\in {\rm SO}(n)\right\}.$$
Therefore, $[U]$ consists of those orthonormal basis $\{\tilde u_1,\cdots, \tilde u_{n+1}\}$ of $\R^{n+1}$ such that $\tilde u_{n+1}=u_{n+1}$  is fixed and they have the same orientation with $\{u_1,\cdots, u_{n+1}\}$. We define the map $\varphi: {\rm SO}(n+1)/{\rm SO}(n)\to S^n\subset \R^{n+1}$ such that
  \begin{equation}\label{sphere-0}
  \varphi([U])= u_{n+1},
  \end{equation}
which is a smooth diffeomorphism.

Next we consider the Riemannian metric on ${\rm SO}(n+1)/{\rm SO}(n)$. For any $a,b\in \R^n$, let
  $$\tilde X=\begin{pmatrix} 0 &\ a\\ -a^{\top} &\ 0\end{pmatrix},\quad \tilde Y=\begin{pmatrix} 0 &\ b\\ -b^{\top} &\ 0\end{pmatrix} \in \mathcal M.$$
Then
  $$\tilde X \tilde Y=\begin{pmatrix} -ab^{\top} &\ 0 \\ 0 & \ -a^{\top} b \end{pmatrix}.$$
Consequently, by \eqref{sphere-1},
  \begin{equation*}
  -\frac1{2(n-1)}B\big(\tilde X,\tilde Y\big) =-\frac12 {\rm Tr}\big(\tilde X \tilde Y\big)= a^{\top}\, b =\<a,b\>,
  \end{equation*}
where $\<\ , \, \>$ is the inner product in $\R^n$. Thus, $-B/2(n-1)$ induces an ${\rm SO}(n+1)$-invariant Riemannian metric on ${\rm SO}(n+1)/{\rm SO}(n)$, such that $\varphi$ defined in \eqref{sphere-0} is an isometry.

Finally we define the fundamental vector fields on $S^n$. For $V\in {\rm SO}(n+1)$, the action $\tau(V)$ of $V$ on ${\rm SO}(n+1)/{\rm SO}(n)$ is
  $$\tau(V)([U]) = [VU]= (VU) K_0.$$
Denote by $\tilde\tau(V)$ the action of $V$ on $S^n$ induced by $\varphi$, that is $\tilde\tau(V)= \varphi\circ \tau(V) \circ \varphi^{-1}$. Thus for any $u \in S^n$,
  $$\tilde\tau(V)(u) = V u.$$
Then for any $\tilde X\in {\rm so}(n+1)$,
  $$A_{\tilde X}(u)=\frac{\d}{\d t}\Big|_{t=0} \tilde\tau \big(\exp\big(t\tilde X\big)\big)(u)= \frac{\d}{\d t}\Big|_{t=0} \exp\big(t\tilde X\big)\,u =\tilde X u,\quad \mbox{for all } u\in S^n\subset \R^{n+1}.$$
Fix any pair $(i,j)$ of integer index with $1\leq i <j\leq n+1$, let $\tilde X^{(ij)}\in {\rm so}(n+1)$ be such that for all $1\leq k<l\leq n+1$,
  $$\tilde X^{(ij)}_{kl}=\begin{cases} 1, & \mbox{if } k=i,\, l=j;\\
  0, & \mbox{otherwise}.
  \end{cases}$$
Then the family of fundamental vector fields $\big\{A_{\tilde X^{(ij)}}: 1\leq i <j\leq n+1\big\}$ verify our requirements.

}
\end{example}

\section{Volume-preserving flows on the torus and the sphere}\label{isotropic flow}

The group $\textup{Diff}(M)$ of diffeomorphisms of $M$ plays an important role in the description of fluid mechanics.
 In this part, we shall treat two important examples: torus $\T^n$ and sphere $\S^n$.

\subsection{Case of torus $\T^n$}

Let $\Z^n$ be the set of lattice points in $\R^n$ and define $\Z^n_0=\Z^n\setminus \{0 \}$, where $0$ means the zero vector in $\R^n$. For $x,y$ in $\R^n$, we denote by $x\cdot y$ or $\<x,y\>$ the scalar product. For $k\in \Z_0^n$, we denote by $k^\perp$ the $(n-1)$-dimensional subspace of $\R^n$ which is orthogonal to $\{k\}$, and we fix an orthonormal basis $\{e_{k,1}, \cdots, e_{k,n-1}\}$ of $k^\perp$. In the two dimensional case, we have the explicit choice $e_{k,1}=(k_2,-k_1)/|k|$. We fix some constant $\beta>n/2$ and define
  $$A_{k,i}(\theta)=\frac{\cos(k\cdot \theta)}{|k|^\beta}e_{k,i},\quad B_{k,i}(\theta)=\frac{\sin(k\cdot \theta)}{|k|^\beta}e_{k,i},\quad \theta\in\T^n,\ 1\leq i\leq n-1.$$
Since $\<k,e_{k,i}\>=0$, it is clear that these vector fields are of divergence free. Moreover, the family $\{A_{k,i},B_{k,i}:1\leq i\leq n-1, k\in \Z_0^n\}$ is a complete orthogonal system of the space of divergence free vector fields $V$ on $\T^n$ such that $\int_{\T^n} V\,\d\theta=0$. We shall check in the following  that they satisfy the conditions (a), (b) and (c).

First, for any $u\in \R^n$,
  $$\<A_{k,i}(\theta),u\>^2+\<B_{k,i}(\theta),u\>^2=\frac{\cos^2(k\cdot\theta)+\sin^2(k\cdot\theta)}{|k|^{2\beta}}\<e_{k,i},u\>^2 =\frac{\<e_{k,i},u\>^2}{|k|^{2\beta}}.$$
Hence
  \begin{equation}\label{torus-n.1}
  \sum_{i=1}^{n-1}\big(\<A_{k,i}(\theta),u\>^2+\<B_{k,i}(\theta),u\>^2\big)
  =\sum_{i=1}^{n-1}\frac{\<e_{k,i},u\>^2}{|k|^{2\beta}}
  =\frac{1}{|k|^{2\beta}}\bigg(|u|^2-\frac{\<u,k\>^2}{|k|^2}\bigg).
  \end{equation}
We have
  \begin{equation}\label{torus-n.2}
  \<u,k\>^2=\sum_{i=1}^n u_i^2k_i^2+\sum_{1\leq i\neq j\leq n} u_iu_j k_ik_j.
  \end{equation}

\begin{lemma}\label{sect-4-lem}
For any $i,j\in \{1,\cdots, n\}$ with $i\neq j$,
  $$\sum_{k\in \Z_0^n} \frac{k_ik_j}{|k|^{2\beta+2}}=0.$$
Moreover,
  $$\sum_{k\in \Z_0^n} \frac{k_1^2}{|k|^{2\beta+2}}=\cdots =\sum_{k\in \Z_0^n} \frac{k_n^2}{|k|^{2\beta+2}}=\frac1n \sum_{k\in \Z_0^n} \frac{1}{|k|^{2\beta}}.$$
\end{lemma}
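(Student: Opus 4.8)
The plan is to prove both identities purely by symmetry, after first securing absolute convergence so that arbitrary reindexing of the lattice sum is legitimate. Since $|k_i|\leq|k|$ for every coordinate, we have $|k_ik_j|\leq|k|^2$, hence each summand is bounded by
  $$\frac{|k_ik_j|}{|k|^{2\beta+2}}\leq\frac{1}{|k|^{2\beta}},$$
and likewise $k_l^2/|k|^{2\beta+2}\leq|k|^{-2\beta}$. The assumption $\beta>n/2$ means $2\beta>n$, so $\sum_{k\in\Z_0^n}|k|^{-2\beta}<\infty$; thus every series in the statement converges absolutely and may be rearranged freely.

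For the first identity I would introduce the sign-flip map $k=(k_1,\dots,k_n)\mapsto k'$ that replaces $k_i$ by $-k_i$ and leaves all other coordinates unchanged. This is an involution, hence a bijection, of $\Z_0^n$ onto itself, and it preserves the Euclidean length $|k'|=|k|$. Because $i\neq j$, the numerator transforms as $k'_ik'_j=-k_ik_j$. Reindexing the absolutely convergent sum by $k\mapsto k'$ therefore gives
  $$\sum_{k\in\Z_0^n}\frac{k_ik_j}{|k|^{2\beta+2}}=\sum_{k\in\Z_0^n}\frac{k'_ik'_j}{|k'|^{2\beta+2}}=-\sum_{k\in\Z_0^n}\frac{k_ik_j}{|k|^{2\beta+2}},$$
so the series equals its own negative and hence vanishes.

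For the second identity I would use that every permutation of the $n$ coordinates is a bijection of $\Z_0^n$ preserving $|k|$, which shows that the value $c:=\sum_{k\in\Z_0^n}k_l^2/|k|^{2\beta+2}$ is independent of $l\in\{1,\dots,n\}$. Summing this common value over $l$ and interchanging the (absolutely convergent) sums, the numerators collapse via $\sum_{l=1}^n k_l^2=|k|^2$:
  $$nc=\sum_{l=1}^n\sum_{k\in\Z_0^n}\frac{k_l^2}{|k|^{2\beta+2}}=\sum_{k\in\Z_0^n}\frac{|k|^2}{|k|^{2\beta+2}}=\sum_{k\in\Z_0^n}\frac{1}{|k|^{2\beta}},$$
which yields $c=\frac1n\sum_{k\in\Z_0^n}|k|^{-2\beta}$, exactly the asserted formula. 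There is no genuine obstacle here: the only point demanding care is the convergence bound that licenses the reindexing, and once that is recorded the two claims follow from the sign-flip and coordinate-permutation symmetries of the lattice.
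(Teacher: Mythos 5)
Your proof is correct and rests on the same symmetry idea as the paper's: cancellation under the sign flip $k_i\mapsto -k_i$ for the first identity and invariance under coordinate permutations plus $\sum_l k_l^2=|k|^2$ for the second. The only difference is bookkeeping — the paper organizes the cancellation within the finite shells $\Lambda_\ell=\{k\in\Z_0^n:|k|^2=\ell\}$, while you reindex the full lattice sum after explicitly verifying absolute convergence from $\beta>n/2$, which is a legitimate (and arguably cleaner) way to license the same rearrangement.
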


\begin{proof}
For any positive integer $\ell$, we define $\Lambda_\ell=\{k\in \Z_0^n: |k|^2=\ell\}$ which is a finite set (empty sets are considered to be finite). Then $\Z_0^n=\cup_{\ell=1}^\infty \Lambda_\ell$. To prove the first assertion, we assume without loss of generality that $i=1, j=2$. We have
  $$\sum_{k\in \Z_0^n} \frac{k_1k_2}{|k|^{2\beta+2}}=\sum_{\ell=1}^\infty \frac1{\ell^{\beta+1}}\sum_{k\in\Lambda_\ell} k_1k_2.$$
For any $k=(k_1,k_2,\cdots, k_n)\in\Lambda_\ell$, let $k^{(1)},k^{(2)}$ and $k^{(3)}$ be the three vectors in $\Lambda_\ell$ such that $k^{(i)}_j=k_j$ for all $j\in\{3,\cdots, n\}$ and $i=1,2,3$, and
  $$k^{(1)}_1=k_1,\quad k^{(1)}_2=-k_2; \quad k^{(2)}_1=-k_1,\quad k^{(2)}_2=k_2;\quad k^{(3)}_1=-k_1,\quad k^{(3)}_2=-k_2.$$
Let $k^{(0)}=k$. Then it is clear that $\sum_{i=0}^3 k^{(i)}_1 k^{(i)}_2=0$, which implies
  $$\sum_{k\in\Lambda_\ell} k_1k_2=0.$$
The first assertion is proved. The proof of the second one is similar. Indeed, if $k=(k_1,k_2,k_3,\cdots, k_n)\in \Lambda_\ell$, then $\bar k=(k_2,k_1,k_3,\cdots, k_n)\in \Lambda_\ell$, from which we conclude that
  $$\sum_{k\in \Lambda_\ell} k_1^2= \sum_{k\in \Lambda_\ell} k_2^2.$$
Thus
  $$\sum_{k\in \Z_0^n} \frac{k_1^2}{|k|^{2\beta+2}}=\sum_{\ell=1}^\infty \frac1{\ell^{\beta+1}} \sum_{k\in \Lambda_\ell} k_1^2 =\sum_{\ell=1}^\infty \frac1{\ell^{\beta+1}} \sum_{k\in \Lambda_\ell} k_2^2=\sum_{k\in \Z_0^n} \frac{k_2^2}{|k|^{2\beta+2}},$$
which finishes the proof.
\end{proof}

Therefore, by \eqref{torus-n.2} and Lemma \ref{sect-4-lem},
  $$\sum_{k\in \Z_0^n} \frac{\<u,k\>^2}{|k|^{2\beta+2}}=\sum_{i=1}^n u_i^2\sum_{k\in \Z_0^n} \frac{k_i^2}{|k|^{2\beta+2}} =\frac{|u|^2}n \sum_{k\in \Z_0^n} \frac{1}{|k|^{2\beta}}.$$
Combining this equality with \eqref{torus-n.1}, we arrive at
  \begin{equation*}
  \sum_{k\in \Z_0^n} \sum_{i=1}^{n-1}\big(\<A_{k,i}(\theta),u\>^2+\<B_{k,i}(\theta),u\>^2\big)= \frac{n-1}n |u|^2 \sum_{k\in \Z_0^n} \frac{1}{|k|^{2\beta}}=\nu_0 |u|^2,
  \end{equation*}
where
  $$\nu_0=\frac{n-1}n\sum_{k\in \Z_0^n} \frac{1}{|k|^{2\beta}}<+\infty.$$
Thus the system $\big\{\frac{A_{k,i}}{\sqrt{\nu_0}},\frac{B_{k,i}}{\sqrt{\nu_0}}:1\leq i\leq n-1, k\in \Z_0^n\big\}$ satisfies the condition (a).

Next,
  $$\nabla_{A_{k,i}} A_{k,i}=\frac{e_{k,i}}{|k|^\beta} \<A_{k,i}, \nabla_\theta \cos(k\cdot\theta)\>= -\frac{e_{k,i}}{|k|^{2\beta}} \cos(k\cdot\theta)\sin(k\cdot\theta)\<e_{k,i},k\>=0.$$
In the same way, $\nabla_{B_{k,i}} B_{k,i}=0$, hence the condition (b) is also verified. Finally, for any vector field $V$ on $\T^n$, we have
  $$\nabla_V A_{k,i}=\frac{e_{k,i}}{|k|^\beta} \<V, \nabla_\theta \cos(k\cdot\theta)\>=-\frac{e_{k,i}}{|k|^\beta} \sin(k\cdot\theta) \<V, k\>.$$
Similarly,
  $$\nabla_V B_{k,i}=\frac{e_{k,i}}{|k|^\beta} \cos(k\cdot\theta) \<V, k\>.$$
Then for $u_1,u_2 \in\R^n$,
  \begin{eqnarray*}
  && \<A_{k,i},u_1\> \<\nabla_V A_{k,i},u_2\> +\<B_{k,i},u_1\> \<\nabla_V B_{k,i},u_2\>\\
  &=& -\frac{\<V,k\>}{|k|^{2\beta}}\cos(k\cdot\theta) \sin(k\cdot\theta) \<e_{k,i},u_1\>  \<e_{k,i},u_2\> +\frac{\<V,k\>}{|k|^{2\beta}}\cos(k\cdot\theta) \sin(k\cdot\theta) \<e_{k,i},u_1\>  \<e_{k,i},u_2\>\\
  &=& 0.
  \end{eqnarray*}
Thus condition (c) is also satisfied.

Now let $\{u_t; \ t\geq 0\}$ be a family of $C^{2,\alpha}$-vector fields of divergence free on $\T^n$. Consider  the following SDE
  \begin{equation}\label{eq4.4}
  \begin{split}
  \d X_t &=\sqrt{\frac{2\nu}{\nu_0}} \sum_{k\in \Z^n_0}\sum_{i=1}^{n-1} \big( A_{k,i}(X_t)\circ \d W_t^{k,i} + B_{k,i}(X_t)\circ \d\tilde W_t^{k,i}\big) + u_t(X_t)\, \d t,\\
  X_0&=x\in\T^n,
  \end{split}
  \end{equation}
where $\big\{W_t^{k,i}, \tilde W_t^{k,i};\, 1\leq i\leq n-1, k\in \Z^n_0\big\}$ is a family of independent  standard real Brownian motions. When $\beta >2+n/2$, the SDE \eqref{eq4.4} defines a stochastic flow $\{X_t; \, t\geq 0\}$ of $C^1$-diffeomorphisms of $\T^n$ (see \cite{Cruzeiro} for the case $n=2$). In this case, by \eqref{A.39}, for almost surely $w$, $x\ra X_t(x,w)$ preserves the measure $\d x$; therefore by Theorem \ref{thm-manifold}, we have

\begin{theorem}\label{th4.1}
The velocity $u_t\in C^{2,\alpha}$ with initial value $u_0$ is a solution of the Navier--Stokes equations on $\T^n$ if and only if
  \begin{equation}\label{eq4.5}
  u_t= \E\Bigl[ {\mathbf P}\big( (X_t^{-1})^\ast u_0^\flat \big)^{\sharp}\Bigr].
  \end{equation}
\end{theorem}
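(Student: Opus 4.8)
The plan is to obtain Theorem~\ref{th4.1} as a direct specialization of the general manifold result, Theorem~\ref{thm-manifold}, to the torus $\T^n$ equipped with the explicit divergence-free family constructed above. The conceptual work has already been done: I would first invoke the computations preceding the statement, which verify that the rescaled vector fields $\bigl\{A_{k,i}/\sqrt{\nu_0},\,B_{k,i}/\sqrt{\nu_0}:1\le i\le n-1,\ k\in\Z_0^n\bigr\}$ satisfy conditions (a), (b) and (c). The only condition of Theorem~\ref{thm-manifold} not yet addressed is (d), namely $\sum_{i\in\I}\div(A_i)\L_{A_i}=0$; but here every vector field $A_{k,i}$ and $B_{k,i}$ is divergence free, since $\<k,e_{k,i}\>=0$, so each summand $\div(A_i)\L_{A_i}$ vanishes identically and (d) holds trivially. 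Thus all four hypotheses (a)--(d) are in force for the chosen system.

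Next I would identify the viscosity normalization. The SDE \eqref{eq4.4} carries the coefficient $\sqrt{2\nu/\nu_0}$ rather than the bare $\sqrt{2\nu}$ appearing in \eqref{SDE}; the factor $1/\sqrt{\nu_0}$ is exactly the rescaling that turns condition (a) in the form $\sum_{k,i}\<A_{k,i},u\>^2+\<B_{k,i},u\>^2=\nu_0|u|^2$ into the normalized identity $\sum\<A_i,u\>^2=|u|^2$ required by hypothesis~(a). Once the vector fields are rescaled by $1/\sqrt{\nu_0}$, the diffusion coefficient in \eqref{eq4.4} is precisely $\sqrt{2\nu}$ relative to the normalized family, matching \eqref{SDE} with the de Rham--Hodge decomposition $-\square=\sum_i\L_{A_i}^2$ from Theorem~\ref{th3.1} and \eqref{A.37}. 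I would note explicitly that, because the torus is flat, $\Ric\equiv0$ and hence by the Weitzenb\"ock formula \eqref{Ric1} one has $-\square=\Delta$, so the Navier--Stokes system \eqref{NSE-manifold} reduces to the classical form \eqref{NSE}; this reconciles the statement with the flat-case Theorem~\ref{thm-constantin}.

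The final step is to simplify the representation formula \eqref{A.318} to the stated form \eqref{eq4.5}. Since the drift $u_t$ is divergence free and all diffusion vector fields are divergence free, formula \eqref{A.39} shows that the exponent in $\tilde\rho_t$ vanishes, whence $\tilde\rho_t\equiv1$ and, by \eqref{A.310}, $\rho_t\equiv1$ as well; equivalently, the flow $X_t$ preserves the volume measure $\d x$ almost surely, as remarked just before the theorem. Setting $\rho_t=1$ in \eqref{A.318} collapses it to $u_t=\E\bigl[\mathbf P\bigl((X_t^{-1})^\ast u_0^\flat\bigr)^{\sharp}\bigr]$, which is exactly \eqref{eq4.5}. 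The ``if and only if'' then follows verbatim from the corresponding equivalence in Theorem~\ref{thm-manifold}.

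I expect no serious obstacle here: the proof is essentially a verification that the hypotheses of the general theorem hold and a bookkeeping of the normalizations. The one point requiring genuine care, rather than the mere citation of earlier results, is the regularity hypothesis: Theorem~\ref{thm-manifold} assumes that \eqref{A.38} generates a stochastic flow of diffeomorphisms, and because $\I$ is now countably infinite this is a nontrivial matter. I would therefore record the condition $\beta>2+n/2$ stated before the theorem, under which \eqref{eq4.4} is known to define a flow of $C^1$-diffeomorphisms (citing \cite{Cruzeiro} in dimension two), so that all the manipulations with $X_t^{-1}$, the pull-back $(X_t^{-1})^\ast$, and Kunita's formula used inside Theorem~\ref{thm-manifold} remain legitimate in the infinite-dimensional-noise setting.
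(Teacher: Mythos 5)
Your proposal is correct and follows essentially the same route as the paper: the paper's own proof consists precisely of the preceding verification of conditions (a)--(c), the observation that the flow is volume-preserving, and a direct appeal to Theorem~\ref{thm-manifold}. You in fact supply several details the paper leaves implicit (that (d) holds trivially since every $A_{k,i}$, $B_{k,i}$ is divergence free, the $1/\sqrt{\nu_0}$ normalization, the identification $-\square=\Delta$ on the flat torus, and the role of $\beta>2+n/2$), all of which are accurate.
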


\subsection{Case of sphere $\S^n$}

Let $\square$ be the de Rham--Hodge Laplacian operator acting on vector fields over $\S^n$. For $\ell\geq 1$, set $c_{\ell,\delta}=(\ell+1)(\ell+n-2)$. Then $\{c_{\ell,\delta};\ell\geq 1\}$ are the eigenvalues of $\square$ corresponding
to the divergence free eigenvector fields. Denote by ${\cal D}_\ell$ the eigenspace associated to $c_{\ell,\delta}$ and
$d_\ell=\hbox{\rm dim}({\cal D}_\ell)$ the dimension of ${\cal D}_\ell$. It is known that
  $$d_\ell\sim O(\ell^{n-1})\quad \hbox{\rm as}\ \ \ell\to +\infty.$$
For $\ell\geq1$, let $\{V_{\ell,k}; k=1, \ldots, d_\ell\}$ be an orthonormal basis of ${\cal D}_\ell$ in $L^2$:
  $$\int_{\S^n} \big\<V_{\ell,k}(x), V_{\alpha,\beta}(x) \big\>\, \d x =\delta_{\ell\alpha}\delta_{k\beta}.$$
Weyl's theorem implies that the vector fields $\{V_{\ell,k};k=1,\ldots,d_\ell,\, \ell\geq1\}$ are smooth. We refer to \cite{Raimond} for a detailed study on isotropic flows on $\S^n$, many properties below were proved there. But we are more familiar with \cite{FangZhang} to which we refer known results. Let $\{b_\ell;\ell\geq 1\}$ be a family of positive numbers such that $\sum_{\ell=1}^\infty b_\ell <+\infty$. Set
  $$A_{\ell,k}=\sqrt{\frac{n b_\ell}{d_\ell}}\, V_{\ell,k}.$$
Below we shall consider the family
  \begin{equation*}
  \bigl\{ A_{\ell,k};\ 1\leq k\leq d_\ell, \ell\geq 1\bigr\}.
  \end{equation*}

Let's first check the condition (a). By \cite[(A.13)]{FangZhang}, we have, for $x,y\in\S^n$
  \begin{equation}\label{eq4.7}
  \frac{n}{d_\ell}\sum_{k=1}^{d_\ell}\<V_{\ell,k}(x),y\>^2=\sin^2\theta,
  \end{equation}
where $\theta$ is the angle between $x$ and $y$. Let $u\in T_x\S^n$; then $\<x,u\>=0$. By \eqref{eq4.7},
  $$\frac{n}{d_\ell}\sum_{k=1}^{d_\ell}\<V_{\ell,k}(x),u\>^2=|u|^2.$$
Therefore,
  \begin{equation*}\label{eq4.8}
  \sum_{\ell\geq 1}\sum_{k=1}^{d_\ell}\<A_{\ell,k}(x),u\>^2= \sum_{\ell\geq 1} \frac{n b_\ell}{d_\ell}\sum_{k=1}^{d_\ell}\<V_{\ell,k}(x),u\>^2= \nu_0\, |u|^2,
  \end{equation*}
where
  \begin{equation*}\label{eq4.9}
  \nu_0=\sum_{\ell\geq 1} b_\ell.
  \end{equation*}

Next, by \cite[Propositions A.3 and A.5]{FangZhang},
  \begin{equation}\label{eq4.6}
  \sum_{k=1}^{d_\ell} \nabla_{V_{\ell,k}} V_{\ell,k}=0.
  \end{equation}
thus the condition (b) is satisfied.

It remains to check the condition (c). To this end, we need a bit more description on $V_{\ell,k}$. Let $\{e_1, \cdots, e_{n+1}\}$ be the canonical basis of $\R^{n+1}$. We denote by $P_0=e_{n+1}$ the north pole. When $n\geq 3$, the group $SO(n+1)$ acts transitively on $\S^n$. Let $x\in\S^n$ be fixed, then there is $g\in SO(n+1)$ such that $x=\chi_g(P_0)=gP_0$. Then
  \begin{equation}\label{eq4.9-2}
  V_{\ell,k}(gP_0)=\sqrt{\frac{d_\ell}{n}}\, \sum_{i=1}^n Q_{ki}^\ell(g)d\chi_g(P_0)e_i,
  \end{equation}
where $\{Q^\ell;\ \ell\geq 1\}$ is the family  of irreducible unitary representations of $SO(n+1)$ which keep the representation $h\ra d\chi_h(P_0)$. It is important that the element $Q_{qi}^\ell$ has an explicit formula for $1\leq q, i\leq n$:
  \begin{equation}\label{eq4.10}
  Q_{qi}^\ell(g)=\bigg(t\gamma_\ell(t)- \frac{1-t^2}{n-1}\gamma_\ell'(t)\bigg) g_{qi}-\bigg(\gamma_\ell(t)+\frac{t}{ n-1} \gamma_\ell'(t)\bigg)\, g_{q,n+1}g_{n+1,i},
  \end{equation}
with $t=g_{n+1,n+1}$ and
  \begin{equation*}
  \gamma_\ell(\cos\theta)=\int_0^\pi \bigl(\cos\theta-\sqrt{-1}\sin\theta\,\cos\varphi\bigr)^{\ell-1}\, \sin^n\varphi\, \frac{\d\varphi}{c_n},
  \end{equation*}
where $c_n=\int_0^\pi \sin^n\varphi\,\d\varphi$. Set $E_j=d\chi_g(P_0)e_j$; then $\{E_1, \cdots, E_n\}$ is an orthonormal basis of $T_x\S^n$. Fix $j$, we consider $\hat g(s)\in SO(n+1)$ which leaves invariant $e_i$ for $i\neq j$, $i\neq n+1$ and
  \begin{equation*}
  \left\{\begin{array}{ccc}\hat g(s)e_j&=&\cos s\, e_j-\sin s\,e_{n+1,}\\
				    \hat g(s) e_{n+1}&=&\sin s\,e_j+\cos s\, e_{n+1}.\end{array}\right.
  \end{equation*}
Then by \cite[p.596]{FangZhang},
  \begin{equation}\label{eq4.11}
  \nabla_{E_j}V_{\ell,k}(x)=\sqrt{\frac{d_\ell}{n}} \sum_{i=1}^n\sum_{\beta=1}^{d_\ell} Q^\ell_{k\beta}(g)\Bigl\{\frac{\d}{ \d s} \Big|_{s=0} Q^\ell_{\beta i}(\hat g(s))\Bigr\} E_i.
  \end{equation}
Combining \eqref{eq4.9-2} and \eqref{eq4.11}, we get
  \begin{align*}
  \sum_{k=1}^{d_\ell} V_{\ell,k}\wedge \nabla_{E_j}V_{\ell,k}&=\frac{d_\ell}{n} \sum_{q,i=1}^n\sum_{\beta, k=1}^{d_\ell} Q^\ell_{k\beta} Q^\ell_{kq}
  \Bigl\{\frac{\d}{ \d s} \Big|_{s=0} Q^\ell_{\beta i}(\hat g(s))\Bigr\} E_q\wedge E_i\\
  &=\frac{d_\ell}{n}  \sum_{q,i=1}^n \Bigl\{\frac{\d}{ \d s} \Big|_{s=0} Q^\ell_{qi}(\hat g(s))\Bigr\} E_q\wedge E_i.
  \end{align*}

In \eqref{eq4.10}, we replace $g$ by $\hat g(s)$; therefore $t=\cos s$, the term $g_{qi}=0$ for $q\neq i$, $g_{i,n+1}=0$ if $i\neq j$, $g_{n+1,i}=0$ if $i\neq j$. We have $g_{jj}=\cos s$ and $g_{n+1,j}g_{j,n+1}=-\sin^2s$. It follows that
  \begin{equation*}\label{eq4.13}
  \sum_{k=1}^{d_\ell} V_{\ell,k}\wedge \nabla_{E_j}V_{\ell,k}=0.
  \end{equation*}
The condition (c) is satisfied. Notice that using \eqref{eq4.10}, we have in fact the stronger result
  \begin{equation*}
  \sum_{k=1}^{d_\ell} V_{\ell,k}\otimes \nabla_{E_j}V_{\ell,k}=0.
  \end{equation*}

Now let $\{u_t; \, t\geq 0\}$ be a family of $C^{2,\alpha}$-vector fields of divergence free on $\S^n$. Let $b_\ell=1/ \ell^{1+\alpha}$. Consider the following SDE
  \begin{equation}\label{eq4.14}
  \d X_t =\sqrt{\frac{2\nu}{\nu_0}}\sum_{\ell\geq 1}\sum_{k=1}^{d_\ell}  A_{\ell, k}(X_t)\circ \d W_t^{\ell, k}  + u_t(X_t)\, \d t,\quad X_0=x\in\S^n,
  \end{equation}
where $\bigl\{W_t^{\ell, k};\ \ell\geq 1, 1\leq k\leq d_\ell\bigr\}$ is a family of independent  standard real Brownian motions.
When $\alpha >2 $, the SDE \eqref{eq4.14} defines a flow of $C^1$-diffeomorphisms of $\S^n$ (see \cite{LeJanRaimond, Luo}). In this case, for almost surely $w$, $x\ra X_t(x,w)$ preserves the measure $\d x$; therefore by Theorem \ref{thm-manifold}, we have

\begin{theorem}\label{th4.2}
The velocity $u_t\in C^{2,\alpha}$ with initial value $u_0$ is a solution of the Navier--Stokes equation on $\S^n$ if and only if
  \begin{equation}
  u_t= \E\Bigl[ {\mathbf P}\big( (X_t^{-1})^\ast u_0^\flat \big)^{\sharp}\Bigr].
  \end{equation}
\end{theorem}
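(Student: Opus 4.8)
The plan is to obtain Theorem~\ref{th4.2} as a direct specialization of the general stochastic Lagrangian representation established in Theorem~\ref{thm-manifold}. To invoke that theorem I must confirm that the family $\{A_{\ell,k};\, 1\le k\le d_\ell,\, \ell\ge1\}$ satisfies all four conditions (a)--(d) of Section~\ref{Extension}. Conditions (a), (b) and (c) have already been verified in the discussion preceding the statement, via \eqref{eq4.7} for (a), via \eqref{eq4.6} for (b), and via the explicit representation formula \eqref{eq4.10} together with the vanishing of $\sum_k V_{\ell,k}\wedge\nabla_{E_j}V_{\ell,k}$ for (c). Condition (d) then comes for free: because each $V_{\ell,k}$ is a divergence-free eigenvector field of $\square$, every $A_{\ell,k}$ is divergence-free, so $\div(A_{\ell,k})=0$ and the sum $\sum_{\ell,k}\div(A_{\ell,k})\L_{A_{\ell,k}}$ vanishes term by term.

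Next I would address the analytic requirements of Theorem~\ref{thm-manifold}. With the choice $b_\ell=1/\ell^{1+\alpha}$ and $\alpha>2$ the diffusion coefficient is smooth enough that the SDE \eqref{eq4.14} generates a stochastic flow $\{X_t\}$ of $C^1$-diffeomorphisms of $\S^n$, as guaranteed by the cited flow-existence results. The crucial point is that the flow is \emph{volume preserving}: since every $A_{\ell,k}$ and the drift $u_t$ are divergence-free, the expression \eqref{A.39} gives $\tilde\rho_t\equiv1$, and hence $\rho_t\equiv1$ by the identity \eqref{A.310}. This places us exactly in the $\rho_t=1$ regime.

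Finally, with all of (a)--(d) in hand and $\rho_t\equiv1$, Theorem~\ref{thm-manifold} applies verbatim. Its equivalence, that $(X,u)$ solves the system \eqref{SDE} if and only if $u$ solves the Navier--Stokes equations \eqref{NSE-manifold}, specializes here to the stated criterion, and the geometric representation \eqref{A.318} collapses, upon setting $\rho_t=1$, precisely to $u_t=\E[\mathbf P((X_t^{-1})^\ast u_0^\flat)^\sharp]$. I expect the only genuine subtlety in this last step---beyond the already completed verification of (c), which was the technical heart of the argument---to be the justification of the interchange of the (now infinite) sum over $(\ell,k)$ with the expectation and the stochastic integral in the proof of \eqref{A.311}--\eqref{A.318}; this is controlled by the summability $\sum_\ell b_\ell<\infty$ entering condition (a) and by the $C^{2,\alpha}$ regularity of $u_t$, which together ensure convergence of the series of Lie derivatives $\sum_{\ell,k}\L_{A_{\ell,k}}^2 v=-\square v$ appearing in Kunita's formula.
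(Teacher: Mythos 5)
Your proposal is correct and follows essentially the same route as the paper: the paper likewise deduces Theorem \ref{th4.2} by combining the verification of conditions (a)--(c) for the eigenvector fields $\{A_{\ell,k}\}$, the observation that divergence-freeness makes (d) trivial and forces $\rho_t\equiv 1$ via \eqref{A.39} and \eqref{A.310}, and a direct application of Theorem \ref{thm-manifold}. Your additional remark on justifying the interchange of the infinite sum with the expectation is a sensible point of care that the paper leaves implicit.
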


\section{Appendix: gradient system on the sphere}\label{appendix}

For reader's convenience, we shall show that the gradient system  in the case of sphere $\S^n$ enjoy properties (a)--(c) in Section \ref{Extension}, but not (d).
 We denote by $\langle\, , \rangle$ the canonical inner product of $\R^{n+1}$. Let $x\in \S^n$, the tangent space $T_x\S^n$ of $\S^n$ at the point $x$ is given by
  \begin{equation*}\label{A1}
  T_x\S^n=\bigl\{v\in\R^{n+1};\ \langle v, x\rangle=0\bigr\}.
  \end{equation*}
Then the orthogonal projection $P_x: \R^{n+1}\ra T_x\S^n$ has the expression:
  \begin{equation*}\label{A2}
  P_x(y)=y-\langle x, y\rangle x.
  \end{equation*}

Let $\{e_1, \cdots, e_{n+1}\}$ be an orthonormal basis of $\R^{n+1}$; then the vector fields $A_i(x)=P_x(e_i)$ have the expression: $A_i(x)=e_i-\langle x, e_i\rangle\, x$ for $i=1, \cdots, n+1$. Let $v\in T_x\S^n$ such that $|v|=1$, consider
  \begin{equation*}\label{A3}
  \gamma(t)=x\,\cos t+v\, \sin t.
  \end{equation*}
Then $\{\gamma(t);\ t\in [0,1]\}$ is the geodesic on $\S^n$ such that $\gamma(0)=x, \gamma'(0)=v$. We have $A_i(\gamma(t))= e_i-\langle\gamma(t),e_i\rangle\,\gamma(t)$. Taking the derivative with respect to $t$ and at $t=0$, we get
  \begin{equation}\label{A4}
  (\nabla_vA_i)(x)=P_x\bigl(-\langle v,e_i\rangle x-\langle x, e_i\rangle v\bigr)=-\langle x, e_i\rangle v.
  \end{equation}
It follows that
  \begin{equation}\label{A5}
  \div(A_i)=-n\langle x, e_i\rangle.
  \end{equation}
Hence,
  \begin{equation}\label{A6}
  \sum_{i=1}^{n+1} \div(A_i)A_i=-n\sum_{i=1}^{n+1} \bigl(\langle x,e_i\rangle e_i-\langle x, e_i\rangle^2x \bigr) = -n (x-x)= 0.
  \end{equation}
Replacing $v$ by $A_i$ in \eqref{A4}, we have $\dis \nabla_{A_i}A_i=-\langle x, e_i\rangle e_i +\langle x,e_i\rangle^2 x$; therefore summing over $i$, we get
  \begin{equation}\label{A7}
  \sum_{i=1}^{n+1}\nabla_{A_i}A_i=0.
  \end{equation}

Now let $v\in T_x\S^n$ and $a,b\in T_x\S^n$, we have
  \begin{align*}
  \langle A_i\wedge \nabla_vA_i, a\wedge b\rangle&=\langle A_i,a\rangle\langle\nabla_vA_i,b\rangle-\langle A_i,b\rangle\langle\nabla_vA_i,a\rangle\\
  &=\langle a,e_i\rangle\langle x,e_i\rangle\langle v,b\rangle-\langle x,e_i\rangle\langle b,e_i\rangle\langle v,a\rangle.
  \end{align*}
Summing over $i$ yields
  \begin{equation}\label{A8}
  \sum_{i=1}^{n+1}\langle A_i\wedge \nabla_vA_i, a\wedge b\rangle=\langle a,x\rangle\langle v,b\rangle-\langle x,b\rangle\langle v,a\rangle=0.
  \end{equation}
Let $B$ be a vector field on $\S^n$; by \eqref{A4}, $\nabla_B A_i=-\<x, e_i\> B$. Using $\dis \L_{A_i}B=\nabla_{A_i}B-\nabla_B A_i$ and combining with \eqref{A5} and \eqref{A6}, we get that
  \begin{equation}\label{A9}
  \sum_{i=1}^{n+1} \div(A_i) \L_{A_i}B=- n B.
  \end{equation}

Finally we notice that by \eqref{A7}--\eqref{A9},  the vector fields $A_1, \cdots, A_{n+1}$ satisfy the conditions (a)--(c) but not (d) in Section \ref{Extension}.

\bigskip
\noindent\textbf{Acknowledgment.} The two authors would like to thank D. Elworthy for his interest on this work, and for drawing their attentions to Riemannian symmetric spaces. The second author is grateful to the financial supports of the National Natural Science Foundation of China (Nos. 11431014, 11571347), the Seven Main Directions (Y129161ZZ1) and the Special Talent Program of the Academy of Mathematics and Systems Science, Chinese Academy of Sciences.


\begin{thebibliography}{99}

\bibitem{Ambrosio} Ambrosio, L., Transport equation and Cauchy problem for BV vector fields. \emph{Invent. Math.} 158 (2004), 227--260.

\bibitem{AmbrosioF}  Ambrosio, L.; Figalli, A.,  Geodedics in the space of measure-preserving maps and plans. \emph{Arch. Rational Mech. Anal.} 194 (2009), 421--469.

\bibitem{AC1} Antoniouk, A.; Arnaudon, M.; Cruzeiro, A. B., Generalized stochastic flows and applications to incompressible viscous fluids. \emph{Bull. Sci. Math.} {138} (2014), no. 4, 565--584.

\bibitem{AC2} Arnaudon, M.; Cruzeiro, A. B., Lagrangian Navier--Stokes diffusions on manifolds: variational principle and stability. \emph{Bull. Sci. Math.} 136 (8) (2012), 857--881.

\bibitem{ACF} Arnaudon, M.; Cruzeiro, A. B.; Fang, Shizan, Generalized stochastic Lagrangian paths for the Navier--Stokes equation, arXiv:1509.03491v1.


\bibitem{Brenier}  Brenier, Y., The least action principle and the related concept of generalized flows for incompressible perfect fluids. \emph{J. Amer. Math. Soc.} 2 (1989), 225--255.

\bibitem{Brenier2} Brenier, Y., Minimal geodesics on groups of volume-preserving maps and generalized solutions of the Euler equations. \emph{ Comm. Pure Appl. Math.} 52 (1999), 411--452.

\bibitem{Flandoli} Busnello, B.; Flandoli, F.; Romito, M., A probabilistic representation for the vorticity of a three-dimensional viscous fluid and for general systems of parabolic equations. \emph{Proc. Edinb. Math. Soc.}  48 (2005),  295--336.

\bibitem{ChenWH} Chen, Weihuan; Li, Xingxiao, Introduction to Riemannian Geometry (in Chinese), volume 2, Peking University Press, 2004.

\bibitem{Chorin} Chorin, A. J., Numerical study of slightly visous flow. \emph{J. Fluid Mech.} 57 (1973), 785--796.

\bibitem{Cruzeiro} Cipriano, F.; Cruzeiro, A. B., Navier--Stokes equation and diffusions on the group of homeomorphisms of the torus. \emph{Comm. Math. Phys.} 275 (2007), no. 1, 255--269.


\bibitem{Constantin}  Constantin, P.; Iyer G., A stochastic Lagrangian representation of the three-dimensional incompressible Navier--Stokes equations. \emph{Comm. Pure Appl. Math.} {61} (2008), no. 3, 330--345.

\bibitem{DiPernaLions} Di Perna, R. J.; Lions, P. L., Ordinary differential equations, transport theory and Sobolev spaces. \emph{Invent. Math.} 98 (1989), 511--547.

\bibitem{Elworthy} Eells, J.; Elworthy, K.D, Stochastic dynamical systems, in ``Control Theory and Topics in Functional analysis'', Vol. III, 179--185, Intern. Atomic Energy Agency, Vienne, 1976.

\bibitem{ELL} Elworthy, K. D.; Le Jan, Y.; Li, Xue-Mei, On the geometry of diffusion operators and stochastic flows. \emph{Lecture Notes in Mathematics}, 1720, {Springer--Verlag}, 1999.

\bibitem{FangLuo} Fang, Shizan; Li, Huaiqian; Luo, Dejun, Heat semi-group and generalized flows on complete Riemannian manifolds. \emph{Bull. Sci. Math.} 135 (2011), 565--600.

\bibitem{FangLuoTh} Fang, Shizan; Luo, Dejun; Thalmaier, A., Stochastic differential equations with coefficients in Sobolev spaces. \emph{J. Funct. Anal.} 259 (2010), 1129--1168.

\bibitem{FangZhang} Fang, Shizan; Zhang, Tusheng; Isotropic stochastic flow of homeomorphisms on $S^d$ for the critical Sobolev exponent. \emph{J. Math. Pures Appl.} 85 (2006), no. 4, 580--597.

\bibitem{H} Helgason S., Differential Geometry and Symmetric spaces, Academic Press, 1962, New York and London.

\bibitem{KN} Kobayashi, S.; Nomizu, K., Foundations of differential geometry. Vol. II. Reprint of the 1969 original. Wiley Classics Library. A Wiley-Interscience Publication. \emph{John Wiley \& Sons, Inc., New York}, 1996.

\bibitem{Kunita} Kunita, H., Stochastic differential equations and stochastic flows of diffeomorphisms. \emph{\'Ecole d'\'et\'e de probabilit\'es de Saint-Flour}, XII--1982, 143--303, \emph{Lecture Notes in Math.}, 1097, Springer, Berlin, 1984.

\bibitem{Kunita2} Kunita, H., Stochastic flows and stochastic differential equations. Cambridge Studies in Advanced Mathematics, 24. \emph{Cambridge University Press, Cambridge}, 1990.

\bibitem{LeJanRaimond} Le Jan, Y.; Raimond, O., Integration of Brownian vector fields. \emph{Ann. Probab.} 30 (2002), 826--873.

\bibitem{LeJan} Le Jan, Y.; Sznitman A. S., Stochastic cascades and 3-dimensional Navier--Stokes equations. \emph{Probab. Theory Related Fields} 109 (1997), 343--366.


\bibitem{Luo} Luo, Dejun, Stochastic Lagrangian flows on the group of volume-preserving homeomorphisms of the spheres. \emph{Stochastics} 87 (2015), no. 4, 680--701.

\bibitem{Malliavin} Malliavin, P., Formule de la moyenne, calcul de perturbations et Th\'eor\`eme d'annulation pour les formes harmoniques. \emph{J. Funct. Anal.} 17 (1974), 274--291.

\bibitem{Pierfelice} Pierfelice, V., The incompressible Navier--Stokes equations on non-compact manifolds, arXive 1406.1644, 2014.

\bibitem{Raimond} Raimond, O., Flots browniens isotropes sur la sph\`ere. \emph{Ann. Inst. H. Poincaré Probab. Statist.} 35 (1999), 313--354.

\bibitem{Stroock} Stroock, D., An introduction to the analysis of paths on a Riemannian manifold. Mathematical Surveys and Monographs, 74. \emph{American Mathematical Society, Providence, RI}, 2000.

\bibitem{Teman} Temam, R., Navier--Stokes equations and nonlinear functional analysis. Second edition. CBMS-NSF Regional Conference Series in Applied Mathematics, 66. \emph{Society for Industrial and Applied Mathematics (SIAM), Philadelphia, PA}, 1995.

\bibitem{TemamW} Temam, R.; Wang, Shouhong, Inertial forms of Navier--Stokes equations on the sphere. \emph{J. Funct. Analysis} 117 (1993), 215--242.

\bibitem{Zhang} Zhang, Xicheng, A stochastic representation for backward incompressible Navier--Stokes equations. \emph{Probab. Theory Related Fields} 148 (2010), no. 1--2, 305--332.

\bibitem{Zhang2} Zhang, Xicheng,  Stochastic flows of SDEs with irregular coefficients and stochastic transport equations. \emph{Bull. Sci. Math.} 134  (2010),  no. 4, 340--378.

\bibitem{Zhang3} Zhang, Xicheng, Quasi-invariant stochastic flows of SDEs with non-smooth drifts on compact manifolds. \emph{Stochastic Process. Appl.} 121 (2011), 1373--1388.

\end{thebibliography}
\end{document}